\theoremstyle{plain}
\newtheorem{thm}[equation]{Theorem}
\newtheorem{pro}[equation]{Proposition}
\newtheorem{cor}[equation]{Corollary}
\newtheorem{lem}[equation]{Lemma}
\theoremstyle{definition}
\newtheorem{exa}[equation]{Example}
\newtheorem{con}[equation]{Convention}
\newtheorem{DEF}[equation]{Definition}
\newtheorem{rem}[equation]{Remark}
\numberwithin{equation}{section}
\def\rd{\dot R}
\def\ad{\dot A}
\def\w{\mathcal{W}}
\def\v{\mathcal{V}}
\def\vd{\dot \v}
\def\vt{\tilde{\v}}
\def\wt{\widetilde{\w}}
\def\bes{\mathfrak{B}}
\def\bbbz{{\mathbb Z}}
\def\G{\widetilde{G}}
\def\fm{(\cdot,\cdot)}
\def\Aut{\mathrm{Aut}}
\def\sgn{\mathrm{sgn}}
\def\andd{\quad\mathrm{and}\quad}
\def\spani{\mathrm{span}}
\def\la{\langle}
\def\ra{\rangle}
\def\sub{\subseteq}
\def\alt{\mathrm{Alt}}
\def\talt{\widetilde\alt}
\def\a{\alpha}
\def\b{\beta}
\def\ep{\epsilon}
\def\sg{\sigma}
\def\lam{\lambda}
\def\Lam{\Lambda}
\def\d{\delta}
\def\xt{\widetilde{x}}
\def\supp{\hbox{supp}}
\def\vep{\varepsilon}
\def\bc{\mathcal{B}}
\def\pc{\mathcal{P}}
\begin{document}



\title[Affine Reflection Systems of Type $A_1$]{Weyl Groups Associated with Affine Reflection Systems of Type $A_1$\\ (Coxeter Type Defining Relations)}


\author{Saeid Azam and Mohammad Nikouei}
\thanks{S. Azam: School of Mathematics, Institute for Research in Fundamental Sciences (IPM), P.O.
Box 19395-5746, Tehran, Iran, and Department of Mathematics, University of Isfahan, P.O. Box
81745-163, Isfahan, Iran (Corresponding Author).\\
Email: azam@sci.ui.ac.ir}
\thanks{M. Nikouei: Department of mathematics, University of Isfahan, P.O.Box. 81745-163, Isfahan, Iran.\\
Email:  nikouei@sci.ui.ac.ir, nikouei2002@yahoo.com}

\subjclass[2010]{Primary 20F55; Secondary 17B67.}
\keywords{Affine reflection system, Weyl group, hyperbolic Weyl group, extended affine Weyl group.}

\maketitle

\begin{abstract}
In this paper, we offer a presentation for the Weyl group of an affine reflection system $R$ of type $A_1$
as well as a presentation for the so called {\it hyperbolic} Weyl group associated with an affine reflection system of type $A_1$.
Applying these presentations to extended affine Weyl groups, and using a description of the center of the hyperbolic Weyl group,
we also give a new finite presentation for an extended affine Weyl group of
type $A_1$. Our presentation for the (hyperbolic) Weyl group of an affine reflection system of type $A_1$ is the
first non-trivial presentation given in such a generality, and can be considered as a model for other types.
\end{abstract}


\setcounter{section}{-1}
\section{\bf Introduction}\label{introduction}
Weyl groups, as reflection groups, always give a geometric meaning to underlying structures such as root systems, Lie algebras and Lie groups. Thus to get a ``good" perspective of these structures, one needs to have a better understanding of their Weyl groups. The present work is dedicated to the study of some new presentations for (hyperbolic) Weyl groups associated with affine reflection systems of type $A_1$.

Affine reflection systems are the most general known extensions of finite and affine root systems introduced by E. Neher and O. Loos in \cite{LN2}.
They include locally finite root systems, toroidal root systems, extended affine root systems, locally extended affine root systems, and root systems extended by an abelian group.
In 1985, K. Saito \cite{S} introduced the notion of an extended affine root system as a generalization of finite and affine root systems.
For a systematic study of such root systems the reader is referred to \cite{AABGP}. Another generalization of finite root systems are
locally finite root systems, which one can find a complete account  of them in \cite{LN1}.
Root systems extended by an abelian group and locally extended affine root systems, introduced in \cite{Y1} and \cite{Y2},
are two other generalizations which include extended affine root systems and locally finite root systems, respectively.

In \cite{AYY}, the authors introduce an equivalent definition for
an affine reflection system (see Definition \ref{ARS def}) which
we will use it here. In finite and affine cases, the corresponding
Weyl groups are fairly known. In particular, they are known to be Coxeter groups and that through their actions implement a specific geometric and
combinatorial structure on their underlying root systems. In
extended affine case, however, it is known that if the nullity is
bigger than one, then the corresponding hyperbolic Weyl groups,
called {\it extended affine Weyl groups}, are not  any more
Coxeter groups, see \cite[Theorem 3.6]{HCox}. Here we record some advances made on
presentations of (hyperbolic) Weyl groups of certain subclasses of
affine reflection systems. As the starting point in this direction, we can name the works of \cite{K}, \cite{AEAWG} and \cite{A1} which consider certain presentationes for toroidal Weyl groups and extended affine Weyl groups. In \cite{ST}, the authors give a generalized
Coxeter presentation for extended affine Weyl groups of nullity
$2$. In \cite{ASFP}, \cite{ASPEAWG}, \cite{ASPCA1}, \cite{HA1} and
\cite{HCox}, the authors offer some new presentations for extended
affine Weyl groups, where they achieve this by a group theoretical point of view analysis of the structure and in particular center of extended affine Weyl groups. To find a
comprehensive account on the structure of extended affine Weyl
groups, the reader is referred to  \cite{MS}, \cite{AEAWG},
\cite{ASEAWGA1}, \cite{Hab} and \cite{HA1}.

In the study of groups associated with affine reflection systems
and other extensions of finite and affine root systems, type
$A_1$ has always played a special role and usually is
considered as a model for other types. A philosophical justification for this is that any (tame) affine reflection system can be considered as a  union of a family of affine reflection systems of type $A_1$.

In this work, we study two groups associated with an affine
reflection system $R$. The first one, defined in \cite{LN2}, is
called the {\it Weyl group} of $R$ which we show by $\w$, and
the other, denoted by $\wt$, is defined when the ground abelian
group is a torsion-free abelian group.  We call $\wt$, the {\it
hyperbolic} Weyl group of $R$, see Definition \ref{wt} (compare
with \cite[Definition 3.1]{HA1}). One should beware that the
notion of a hyperbolic Weyl group is a generalization of the
definition of an extended affine Weyl group.

Let $R$ be some affine reflection system of type $A_1$ in an abelian group $A$.  In Section \ref{ARS Def},
we give preliminary definitions as well as recording
some results and facts on affine reflection systems. In Sections \ref{Presentation} and \ref{hweyl},
we offer two presentations for $\w$ and  $\wt$, respectively, see Theorems \ref{W(V) AP} and \ref{W AP}.
A quick look at these presentations shows that both $\w$ and $\wt$
have soluble word problems. Also, using the description of the center of $\wt$ given in Proposition \ref{SM lem},
one perceives that $\wt$ is a central extension of $\w$.
In Theorem \ref{W W(V) CentP}, we prove under a very particular set of conditions that
the existence of a presentation for $\w$ is equivalent to
the existence of a presentation for $\wt$. This is the main tool in Section \ref{RedPre}
for obtaining a presentation for $\wt$ for the case in which the given  reflectable base is \textit{elliptic-like}.
In Section \ref{RedPre}, we assume that $R$ is an extended affine root system of type $A_1$, i.e.,
$R$ is an affine reflection system in a
free abelian group $A$ of rank $\nu+1$. Then we offer two finite presentations, one for $\w$ in
Theorem \ref{Baby W pre} and the other for the so called {\it baby} extended affine Weyl group $\wt$ of
type $A_1$ in Proposition \ref{Baby Tor Pre}. The latter presentation has $\nu+1$ generators and
$\big(\nu(\nu+1)/2\big)+\nu+1$ relations, where relations consist of $\nu(\nu+1)/2$ central elements and $\nu+1$ involutions.
The paper is concluded with an appendix, Section \ref{geometric}, in which we provide a geometric approach to the proof of Theorem \ref{Baby W pre}.

Some of our results in this work are suggested by running
a computer programming designed specifically for calculating certain relations among elements of an extended affine Weyl group of type $A_1$.
This program consists of several algorithms, written in Visual Basic .Net. The interested reader can find the program and its source code at http://sourceforge.net/projects/central-exp/files/.

\section{\bf Affine reflection system and their Weyl groups}\setcounter{equation}{0}\label{ARS Def}
In this section, we recall the definition and some properties of affine reflection systems. Affine reflection systems
are introduced by E. Neher and  O. Loos  in \cite{LN2}. Here we follow an equivalent definition given in \cite{AYY}.
Let $A$ be an abelian group. By a symmetric form on $A$, we mean a symmetric
bi-homomorphism $\fm :A\times A\longrightarrow\mathbb{Q}$.
The radical of the form is the subgroup $A^0=\{\alpha\in A\;|\;(\alpha,A)=0\}$ of $A$.
Also, let $A^\times=A\setminus A^0$, $\bar{A}=A/A^0$ and $\;\bar{}:A\longrightarrow\bar{A}$ be the canonical map.
The form $\fm$ is called {\it positive definite (positive semidefinite)} if $(\a,\a)>0$ $((\a,\a)\geq 0)$ for all $\a\in A\setminus\{0\}$. If $\fm$ is positive semidefinite, then it is easy to see that
$$A^0=\{\alpha\in A\;|\;(\alpha,\alpha)=0\}.$$
Assume from now on that $\fm$ is positive semidefinite on $A$.
For a subset $B$ of $A$, let $B^\times=B\setminus A^0$ and $B^0=B\cap A^0$.
For $\alpha,\beta\in A$, if $(\alpha,\alpha)\not= 0$, set $(\beta,\alpha^\vee):=
2(\beta,\alpha)/(\alpha,\alpha)$ and if $(\alpha,\alpha)=0$, set $(\beta,\alpha^\vee):=0$.
A subset $X$ of $A$ is called {\it connected} if it cannot be written as a disjoint union of two nonempty
orthogonal subsets. The form $\fm$ induces a unique form on $\bar{A}$ by
$$(\bar{\alpha},\bar{\beta})=(\alpha,\beta)\quad\mathrm{for}\;\alpha,\beta\in A.$$
This form is positive definite on $\bar{A}$. Thus, $\bar{A}$ is a torsion-free group.
For a subset $S$ of $A$, we denote by $\la S\ra$, the subgroup generated by $S$.
Here is the definition of an affine reflection system given in \cite{AYY}.

\begin{DEF}\label{ARS def}
Let $A$ be an abelian group equipped with a nontrivial symmetric
positive semidefinite form $\fm$. Let $R$ be a subset of $A$. The triple $(A,\fm,R)$, or $R$ if
there is no confusion, is called an {\it affine reflection system} if it satisfies the following 3
axioms:
\begin{itemize}
\item[(R1)] $R=-R$,
\item[(R2)] $\langle R\rangle=A$,
\item[(R3)] for $\alpha\in R^\times$ and $\beta\in R$, there exist $d, u\in\mathbb{Z}_{\geq0}$ such that
$$(\beta+\mathbb{Z}\alpha)\cap R=\{\beta-d\alpha,\dots,\beta+u\alpha\}\quad\mathrm{and}\quad d-u=(\beta,\alpha^\vee).$$
\end{itemize}
The affine reflection system $R$ is called {\it irreducible} if it
satisfies
\begin{itemize}
\item[(R4)] $R^\times$ is connected.
\end{itemize}
Moreover, $R$ is called {\it tame} if
\begin{itemize}
\item[(R5)] $R^0\subseteq R^\times-R^\times$ (elements of $R^0$ are non-isolated).
\end{itemize}
Finally $R$ is called {\it reduced} if it satisfies
\begin{itemize}
\item[(R6)] $\alpha\in R^\times\Rightarrow 2\alpha\not\in R^\times$.
\end{itemize}
Elements of $R^\times$ (resp. $R^0$) are called {\it non-isotropic
roots} (resp. {\it isotropic roots}).
An affine reflection system $(A,\fm,R)$ is called a {\it locally finite root system} if $A^0=\{0\}$.
\end{DEF}
Here, we recall some results about the structure of affine reflection systems from \cite{AYY}.
The image of $R$ under $\bar{\;}$ is shown by $\bar{R}$. Since the form is nontrivial, $A\not=A^0$.
It then follows from axioms that
$$0\in R.$$

\begin{pro}\cite[Corollary 1.9]{AYY}\label{LFRA}
If $(A,\fm,R)$ is an affine reflection system, then $(\bar{R},\fm,\bar{A})$ is a locally finite root system.
In particular, if $R$ is irreducible, the induced form on $\bar{\v}:=\mathbb{Q}\otimes_\mathbb{Z}\bar{A}$ is positive definite.
\end{pro}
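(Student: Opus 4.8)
The plan is to verify that the triple $(\bar A,\fm,\bar R)$ satisfies the three axioms (R1)--(R3), since by definition a locally finite root system is precisely an affine reflection system whose radical vanishes, and the induced form on $\bar A$ is already positive definite, so $\bar A^0=\{0\}$ automatically. Axioms (R1) and (R2) are immediate: the canonical map $\bar{\;}\colon A\to\bar A$ is a surjective homomorphism, so $\bar R=-\bar R$ and $\la\bar R\ra=\overline{\la R\ra}=\bar A$. All the content is therefore in (R3), and the whole difficulty is to transfer the root-string property from $R$ to its image $\bar R$.

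For (R3), fix $\bar\a\in\bar R^\times$ and $\bar\b\in\bar R$; since $\bar A^0=\{0\}$ we may choose a lift $\a\in R^\times$ of $\bar\a$ and a lift $\b\in R$ of $\bar\b$, and I record the key compatibility $(\bar\b,\bar\a^\vee)=(\b,\a^\vee)$, which holds because the form on $\bar A$ is induced from that on $A$. I would then study the set $S=\{n\in\bbbz : \bar\b+n\bar\a\in\bar R\}$ and show it is a finite unbroken interval $\{-d,\dots,u\}$ with $d-u=(\bar\b,\bar\a^\vee)$. The first ingredient is a symmetry: from (R3) for $R$ one checks that the reflection $s_\a$ maps each $\a$-string in $R$ to itself, hence $s_\a(R)=R$; since $\overline{s_\a(\gamma)}=s_{\bar\a}(\bar\gamma)$ for all $\gamma$, the reflection descends to $s_{\bar\a}$ and $s_{\bar\a}(\bar R)=\bar R$. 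Consequently $s_{\bar\a}$ acts on $S$ by the order-reversing involution $n\mapsto -(\bar\b,\bar\a^\vee)-n$, which once $S$ is known to be a finite interval immediately yields $d-u=(\bar\b,\bar\a^\vee)$.

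Next I would prove that $S$ has no gaps. Suppose $n_0\in S$, $n_0+1\notin S$, and let $n_1>n_0$ be the next element of $S$. Lifting $\bar\b+n_0\bar\a$ to some $\beta_0\in R$, the failure $\bar\b+(n_0+1)\bar\a\notin\bar R$ forces $\beta_0+\a\notin R$, so the $\a$-string through $\beta_0$ has vanishing upper end and $(\beta_0,\a^\vee)\ge 0$; similarly, lifting $\bar\b+n_1\bar\a$ to $\gamma_1\in R$, the failure $\bar\b+(n_1-1)\bar\a\notin\bar R$ gives $\gamma_1-\a\notin R$ and $(\gamma_1,\a^\vee)\le 0$. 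Using the compatibility of forms, $(\beta_0,\a^\vee)=(\bar\b,\bar\a^\vee)+2n_0$ and $(\gamma_1,\a^\vee)=(\bar\b,\bar\a^\vee)+2n_1$, so $(\bar\b,\bar\a^\vee)+2n_0\ge 0\ge(\bar\b,\bar\a^\vee)+2n_1$ forces $n_0\ge n_1$, contradicting $n_1>n_0$. Hence $S$ is an interval.

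The main obstacle is finiteness of $S$ (equivalently, that the finiteness built into (R3) survives the quotient). Here I would use positive definiteness on $\bar{\v}$: by Cauchy--Schwarz the product of Cartan integers $(\bar\gamma,\bar\a^\vee)(\bar\a,\bar\gamma^\vee)$ is an integer that is $<4$ unless $\bar\gamma$ and $\bar\a$ are proportional. If $S$ were infinite then, since $(\bar\b+n\bar\a,\bar\a^\vee)=(\bar\b,\bar\a^\vee)+2n$ is large and nonzero for large $|n|$, each such $\bar\b+n\bar\a$ would have to be proportional to $\bar\a$; this forces $\bar\b$ itself proportional to $\bar\a$ and hence $\bar R$ to contain $q\bar\a$ for arbitrarily large $q\in\mathbb{Q}$. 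Lifting $q\bar\a$ to a root in $R^\times$ and computing the Cartan integer $(\bar\a,(q\bar\a)^\vee)=2/q$, which must be an integer by (R3) for $R$, bounds $q$ and yields the desired contradiction. Combining the three ingredients completes (R3). Finally, for the ``in particular'' clause, positive definiteness of the form on $\bar{\v}=\mathbb{Q}\otimes_\bbbz\bar A$ follows from its positive definiteness on $\bar A$ by clearing denominators (using that $\bar A$ is torsion-free), and connectedness of $R^\times$ transfers to $\bar R^\times$ because $\bar{\;}$ preserves the form, so any orthogonal decomposition of $\bar R^\times$ would pull back to one of $R^\times$.
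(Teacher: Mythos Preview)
The paper does not supply its own proof of this proposition; it is quoted verbatim from \cite[Corollary~1.9]{AYY} and used as a black box, so there is nothing in the paper to compare your argument against.

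Your argument is essentially correct and gives a self-contained verification. Two minor points of presentation. First, in the finiteness step of (R3) you invoke positive definiteness of the form on $\bar\v=\mathbb{Q}\otimes_{\mathbb Z}\bar A$, but you only justify that in your final paragraph; since the clearing-denominators argument (using that $\bar A$ is torsion-free) is completely independent of (R1)--(R3), it would be cleaner to establish it first and then use it, avoiding any appearance of circularity. Second, your closing remark about connectedness of $\bar R^\times$ is not required: the statement asserts only positive definiteness on $\bar\v$, not irreducibility of $\bar R$, and in the present paper's setup (where the form on $A$ is already assumed positive semidefinite) the positive definiteness on $\bar\v$ in fact holds without any irreducibility hypothesis. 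With these cosmetic rearrangements your proof stands.
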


{\it Type} of $R$ is defined to be the type of $\bar{R}$. Since this work is devoted to the study of Weyl groups  associated with affine reflection systems
of type $A_1$, {\it for the rest of this work we assume that $(A,\fm, R)$ is a tame irreducible affine reflection system of type $A_1$}.
By \cite[Theorem 1.13]{AYY}, $R$ contains a finite root system $\rd=\{0,\pm\ep\}$ and a subset $S\sub R^0$, such that
\begin{equation}\label{type-A_1}
R=(S+S)\cup(\rd+S),
\end{equation}
where $S$ is a {\it pointed reflection subspace of} $A^0$, in the sense that it satisfies,
\begin{equation}\label{eq1}
0\in S,\quad S\pm 2S\sub S,\andd \la S\ra=A^0.
\end{equation}
In fact by \cite[Theorem 1.13]{AYY}, any tame irreducible affine reflection system of type $A_1$ arises this way.

\begin{rem}
By \cite[Remark 1.16]{AYY}, the definition of a tame irreducible affine reflection system of type $A_1$
is equivalent to the definition of an irreducible root system extended by an abelian group, of type $A_1$,
defined by Y. Yoshii \cite{Y1}.
If $A$ is a free abelian group of finite rank and $R$ is a
tame irreducible affine reflection system,
we may identify $R$ with
the subset $1\otimes R$ of $ R\otimes _\mathbb{Q}A$.
Then $R$ is isomorphic to an extended affine root system in the sense of \cite{AABGP}.
\end{rem}

Let $\Aut(A)$ be the group of automorphisms of $A$. For $\a\in A$, one defines $w_\a\in\Aut(A)$ by
$$w_\a(\b)=\b-(\b,\a^\vee)\a.$$ We call $w_\a$ the {\it reflection based on} $\a$, since it sends $\a$ to $-\a$ and
fixes pointwise the subgroup $\{\b\in A\mid (\b,\a)=0\}$. Note that if $\a\in A^0$, then according to our convention,
$(\b,\a^\vee)=0$ for all $\b$ and so $w_\a=\hbox{id}_A$.
For a subset $S$ of $R$, the subgroup of $\Aut(A)$ generated by $w_\a$, $\a\in S$, is denoted by $\w_S$.
It now allows us to define the Weyl group of the affine reflection system $R$.

\begin{DEF}\label{def1}
$\w:=\w_R$ is called the {\it Weyl group} of $R$.
\end{DEF}

In a similar way, one defines $w_{\bar{\alpha}}$, the reflection based on $\bar{\alpha}$,
for $\bar{\alpha}\in\bar{A}$, and $\bar{\w}$, the Weyl group of $\bar{R}$, which is a subgroup of
$\mathrm{Aut}(\bar{A})$ generated by $w_{\bar{\alpha}}$, $\bar{\alpha}\in\bar{R}$.

Next we associate another reflection group with $R$. To do so, from here until the end of this section we assume that $A$ is a torsion-free abelian group. To justify this assumption on $A$, we recall that the root system of any invariant affine reflection algebra, over a field of characteristic zero,
is an affine reflection system whose $\bbbz$-span is a torsion-free abelian group.
Indeed, affine reflection systems corresponding to invariant affine reflection algebras, are contained in the dual space of the so called \textit{toral subalgebras}. Thus the root system is a subset of a torsion-free group.

According to \cite[Remark 1.6 (ii)]{AYY}, one can transfer the form on $A$ to the vector space
$\v:=\mathbb{Q}\otimes_\mathbb{Z}A$. Then the radical of the form can be
identified with $\v^0:=\mathbb{Q}\otimes A^0$ and one may conclude that the form on $\v$ is a
positive semidefinite symmetric bilinear form.
Then $A$ and $R$ can be identified with subsets $1\otimes A$ and $1\otimes R$ of $\v$, respectively.
Set $\dot{A}=\mathbb{Z}\ep$ and $\vd:=\mathbb{Q}\otimes\dot{A}$, then we have $\v=\vd\oplus\v^0$.

Since $\la S\ra=A^0$, it follows that $S\equiv 1\otimes S$ spans $\v^0$.
So $S$ contains a basis $\mathfrak{B}^0=\{\sg_j\mid j\in J\}$ of $\v^0$. Therefore, if $\dot{\mathfrak{B}}=\{\ep\}$, then
$\mathfrak{B}=\dot{\mathfrak{B}}\cup\mathfrak{B}^0$ forms a basis for $\v$.
For $j\in J$, define
$\lam_j$ as an element of the dual space $(\v^0)^\star$ of $\v^0$ by $\lam_j(\sg_i)=\d_{ij}$.
We call $(\mathfrak{B}^0)^\star:=\{\lam_j\mid j\in J\}$ the {\it dual basis} of $\mathfrak{B}^0$.
Let $(\v^0)^\dag$ be the subspace of $(\v^0)^\star$ spanned by the dual basis $(\mathfrak{B}^0)^\star$. Namely
$$(\v^0)^\dag:=\sum_{j\in J}{\mathbb Q}\lam_j.$$
Note that $(\v^0)^\dag$, as a subspace of $(\v^0)^\star$,
can be identified with the restricted dual space $\sum_{j\in J}({\mathbb Q}\sg_j)^\star$
of $\v^0$, with respect to the basis $\{\sg_j\mid j\in J\}$.
We now set
$$\vt:=\v\oplus(\v^0)^\dag=\vd\oplus\v^0\oplus (\v^0)^\dag.$$
We then extend the form $\fm$ on $\v$ to a non-degenerate form on $\vt$, denoted again by $\fm$ as  follows,
\begin{itemize}
\item $(\vd,(\v^0)^\dag):=((\v^0)^\dag,(\v^0)^\dag):=\{0\}$,
\item $(\sigma,\lambda):=\lambda(\sg),\hbox{ for }\sg\in\v^0\hbox{ and }\lam\in (\v^0)^\dag$.
\end{itemize}
We call $\vt$ the {\it hyperbolic extension} of $\v$, with respect to the basis $\{\sg_j\mid i\in J\}$.
For each $\alpha\in\v$, we define $w_\a\in\Aut(\vt)$ by
$$w_\alpha(\beta)=\beta-(\beta,\alpha^\vee)\alpha,$$
where $(\beta,\alpha^\vee):=2(\beta,\alpha)/(\alpha,\alpha)$, if $(\a,\a)\not=0$ and $(\b,\a^\vee):=0$, if $(\a,\a)=0$.
Clearly, $w_\a$ is a reflection with respect to the vector space $\vt$.
For a subset $S$ of $\v$, we define $\wt_S$ to be the subgroup of $\Aut(\vt)$ generated by $w_\a$, $\a\in S$.
\begin{DEF}\label{wt}
We call $\wt:=\wt_R$, the {\it hyperbolic Weyl group} of $R$.
\end{DEF}

We note that for $w\in\wt$, $\a\in\v$ and $\b,\gamma\in\vt$, we have
\begin{equation}\label{Conj Pro}
w_\a^2=1,\qquad(w\b,w\gamma)=(\b,\gamma),\andd ww_\alpha w^{-1}=w_{w(\alpha)}.
\end{equation}

Since $A$ as a torsion-free abelian group is embedded in $\v$,
the restriction of elements of $\wt$ to $\v$ induces an epimorphism $\varphi:\wt\longrightarrow\w$.

\section{\bf Alternating presentation}\setcounter{equation}{0}\label{Presentation}
Let $A$ be an arbitrary abelian group and $(A,\fm,R)$ be a fixed tame irreducible affine reflection system of type $A_1$.
As we have seen in (\ref{type-A_1})
$$R=(S+S)\cup(\dot{R}+S),$$
where $S$ is a subset of $R^0$ satisfying (\ref{eq1}), and $\rd=\{0,\pm\ep\}$. We may assume that
\begin{equation}\label{form}
(\epsilon,\epsilon)=2.
\end{equation}
Note that $A=\dot{A}\oplus A^0$, $\la\rd\ra=\dot A$ and $\la R^0\ra=\la S\ra=A^0$. Let
$p:A\rightarrow A^0$ be the projections onto $A^0$.

For each $\alpha\in A$, we have
$$\alpha=\sgn(\alpha)\epsilon+p(\alpha),$$
where $\sgn:A\longrightarrow{\mathbb Z}$ is a group epimorphism. Clearly, each $\a\in A$ is uniquely
determined by $\sgn(\a)$ and $p(\a)$.
Then for $\a\in R^\times$ and $\b\in R$, we have
\begin{equation}\label{form-sgn}
(\beta,\alpha^\vee)=(\beta,\alpha)=2\sgn(\beta)\sgn(\alpha).
\end{equation}

\begin{lem}\label{action lem}
Let $w:=w_{\alpha_1}\cdots w_{\alpha_t}\in\w$.
Then for $\beta\in R^\times$, we have
$$\sgn(w\b)=(-1)^t\sgn(\beta)\andd p(w\b)=p\big(\beta-2(-1)^t\sgn(\b)\sum_{i=1}^t(-1)^i\sgn(\alpha_i){\alpha_i}\big).$$
\end{lem}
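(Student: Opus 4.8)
The plan is to induct on the number $t$ of reflections, peeling off the \emph{rightmost} factor $w_{\alpha_t}$ at each stage; this keeps the index set $\{1,\dots,t-1\}$ of the inductive hypothesis aligned with the first $t-1$ summands, so that no reindexing is needed and $\alpha_t$ simply enters as the new top term of the sum. It is understood (as is implicit in the statement) that each $\alpha_i\in R^\times$: reflections based on isotropic roots are trivial, so the $w_\alpha$ with $\alpha\in R^\times$ already generate $\w$. Putting $\beta=\alpha$ in (\ref{form-sgn}) then forces $2=(\alpha,\alpha^\vee)=2\sgn(\alpha)^2$, so $\sgn(\alpha_i)^2=1$ for every $i$. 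I would also record that each generator satisfies $w_{\alpha_i}(R)=R$: for $\alpha\in R^\times$ and $\beta\in R$, axiom (R3) writes $w_\alpha(\beta)=\beta-(d-u)\alpha$ with $-d\le u-d\le u$, so $w_\alpha(\beta)$ lies in the given root string, and $w_\alpha^2=1$ yields equality.

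The base case $t=1$ is a direct computation: by (\ref{form-sgn}), $w_{\alpha_1}(\beta)=\beta-2\sgn(\beta)\sgn(\alpha_1)\alpha_1$, and applying the two homomorphisms $\sgn$ and $p$, together with $\sgn(\alpha_1)^2=1$, yields $\sgn(w_{\alpha_1}\beta)=-\sgn(\beta)$ and the asserted formula for $p(w_{\alpha_1}\beta)$.

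For the inductive step I would write $w=w''w_{\alpha_t}$ with $w''=w_{\alpha_1}\cdots w_{\alpha_{t-1}}$ and set $\gamma:=w_{\alpha_t}(\beta)=\beta-2\sgn(\beta)\sgn(\alpha_t)\alpha_t$. Applying $\sgn$ gives $\sgn(\gamma)=-\sgn(\beta)$; since $w_{\alpha_t}(R)=R$ and $\sgn(\gamma)=-\sgn(\beta)\ne0$, we have $\gamma\in R^\times$, so the inductive hypothesis applies to $w''$ acting on $\gamma$. For the sign this gives at once $\sgn(w\beta)=\sgn(w''\gamma)=(-1)^{t-1}\sgn(\gamma)=(-1)^t\sgn(\beta)$. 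For the projection I would substitute the expressions for $\gamma$ and $\sgn(\gamma)=-\sgn(\beta)$ into the inductive formula for $p(w''\gamma)$ and reorganize: using $(-1)^{t-1}=-(-1)^t$, the separated term $-2\sgn(\beta)\sgn(\alpha_t)\alpha_t$ is exactly the $i=t$ contribution $-2(-1)^t\sgn(\beta)\cdot(-1)^t\sgn(\alpha_t)\alpha_t$ of the target sum, so the two expressions combine into $p\big(\beta-2(-1)^t\sgn(\beta)\sum_{i=1}^t(-1)^i\sgn(\alpha_i)\alpha_i\big)$, as required.

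The step I expect to demand the most care is not conceptual but this final sign-and-index reconciliation: one must track the interplay of the global factor $(-1)^t$, the scalar $\sgn(\gamma)=-\sgn(\beta)$ picked up from the peeled reflection, and the parity of the $i=t$ term, and verify that they collapse so that the peeled-off reflection lands precisely as the top summand. The only other point needing attention is ensuring the intermediate vector $\gamma$ stays in $R^\times$, so that the inductive hypothesis is genuinely applicable; this is exactly where axiom (R3) is used.
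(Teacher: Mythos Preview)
Your proof is correct and follows essentially the same induction on $t$ as the paper. The one organizational difference is that the paper peels off the \emph{leftmost} reflection, writing $w'=w_{\alpha_2}\cdots w_{\alpha_{k+1}}$ and applying the inductive hypothesis to $w'$ acting on the original $\beta$; this sidesteps the need to invoke (R3) to keep the intermediate vector in $R^\times$, at the cost of the small index shift you sought to avoid.
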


\begin{proof}
First, let $t=1$, then $w\b=w_\alpha(\b)=\b-(\b,\a)\a$, so by (\ref{form-sgn}) we have
\begin{equation}
\sgn(w\b)=sgn(\b)-2\sgn(\a)\sgn(\b)\sgn(\a)=-\sgn(\b)=(-1)^1\sgn(\b),
\end{equation}
and
\begin{equation}
p(w\b)=p(\b-(\b,\a)\a)=p(\b-2\sgn(\a)\sgn(\b)\a).
\end{equation}
So the result holds for $t=1$.
Now, assume that the statement holds for $t\leq k$. Now if $w'=w_{\a_2}\cdots w_{\a_{k+1}}$, then for $t=k+1$, we have
$w\b=w'\b-(w'\b,\a_1)\a_1$, so
\begin{eqnarray*}
\sgn(w\b)&=&\sgn(w'\b)-2\sgn(w'\b)\sgn(\a_1)\sgn(\a_1)\\
&=&-\sgn(w'\b)\\
&=&-(-1)^k\sgn(\b)\\
&=&(-1)^{k+1}\sgn(\b).
\end{eqnarray*}
Also
\begin{eqnarray*}
p(w\b)&=&p(w'\b)-2\sgn(w'\b)\sgn(\a_1)p(\a_1)\\
&=&p\big(\b-2(-1)^k\sgn(\b)\sum_{i=2}^{k+1}(-1)^{i-1}\sgn(\a_i)\a_i\big)
-2(-1)^k\sgn(\b)\sgn(\a_1)p(\a_1)\\
&=& p\big(\b-2(-1)^{k+1}\sgn(\b)\sum_{i=1}^{k+1}(-1)^{i}\sgn(\a_i)\a_i\big).
\end{eqnarray*}
\end{proof}

\begin{pro}\label{cor1}
For $\a_1,\ldots,\a_n\in R^\times$, we have $w:=w_{\a_1}\cdots w_{\a_n}=1$ in $\w$, if and
only if $n$ is even and
$$\sum_{i=1}^n(-1)^i\sgn(\a_i)p(\a_i)=0.$$
In particular, if $n$ is odd, then $w^2=1$.
\end{pro}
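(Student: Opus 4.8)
The plan is to reduce the relation $w=1$ in $\w$ to a single condition on the action of $w$ on $\ep$, and then read off the two asserted conditions directly from Lemma \ref{action lem}. First I would observe that, since $A^0$ is the radical of the form, every reflection $w_\a$ with $\a\in R^\times$ fixes $A^0$ pointwise: for $\b\in A^0$ one has $(\b,\a)=0$, hence $(\b,\a^\vee)=0$ and $w_\a(\b)=\b$. Therefore $w=w_{\a_1}\cdots w_{\a_n}$ restricts to the identity on $A^0$. Since $w$ is a group automorphism of $A=\mathbb{Z}\ep\oplus A^0$ that fixes $A^0$ pointwise, it equals $\mathrm{id}_A$ exactly when $w\ep=\ep$. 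Thus $w=1$ in $\w$ is equivalent to $w\ep=\ep$.

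Next I would apply Lemma \ref{action lem} with $\b=\ep$, using $\sgn(\ep)=1$ and $p(\ep)=0$. This yields $\sgn(w\ep)=(-1)^n$ and, since $p$ is a homomorphism,
$$p(w\ep)=-2(-1)^n\sum_{i=1}^n(-1)^i\sgn(\a_i)p(\a_i).$$
Comparing $w\ep=\sgn(w\ep)\ep+p(w\ep)$ with $\ep=1\cdot\ep+0$ in the direct sum $A=\mathbb{Z}\ep\oplus A^0$, the equality $w\ep=\ep$ holds if and only if $\sgn(w\ep)=1$ and $p(w\ep)=0$. The first condition says $(-1)^n=1$, i.e.\ $n$ is even, and the second says $\sum_{i=1}^n(-1)^i\sgn(\a_i)p(\a_i)=0$. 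The one delicate point — which I expect to be the main obstacle — is the passage from $p(w\ep)=0$ to the stated vanishing of the sum: this requires cancelling the scalar $-2(-1)^n$, hence the implication $2x=0\Rightarrow x=0$ for $x\in A^0$. I would therefore need to invoke that $A^0$ is $2$-torsion-free (as it is for the free abelian groups underlying the applications); the ``if'' direction, by contrast, needs no such hypothesis.

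For the final assertion, when $n$ is odd I would sidestep this issue entirely. In that case $\sgn(w\ep)=-1$, so $w\ep=-\ep+c$ with $c:=p(w\ep)\in A^0$. Applying $w$ once more and using that $w$ fixes $A^0$ pointwise gives $w^2\ep=w(-\ep+c)=-w\ep+c=-(-\ep+c)+c=\ep$. Since $w^2$ also fixes $A^0$ pointwise, we conclude $w^2=\mathrm{id}_A=1$.
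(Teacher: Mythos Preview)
Your argument for the main equivalence is essentially the paper's: both proofs invoke Lemma~\ref{action lem} and read off the two conditions from $\sgn$ and $p$. The only cosmetic difference is that the paper checks $w\beta=\beta$ for all $\beta\in R^\times$ (using that $R^\times$ generates $A$), whereas you observe once that $w$ fixes $A^0$ pointwise and so need only test $\beta=\ep$. Your worry about the factor $2$ is well placed and honest: the paper's proof passes through the same equation $2\sum_i(-1)^i\sgn(\a_i)p(\a_i)=0$ and simply does not comment on the cancellation, so you have not introduced any gap that the paper avoids.

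For the ``in particular'' clause your route genuinely differs. The paper writes $w^2$ as the length-$2n$ word $w_{\a_1}\cdots w_{\a_n}w_{\a_1}\cdots w_{\a_n}$ and verifies directly that the alternating sum for this $2n$-tuple vanishes (the two copies cancel because $n$ is odd), then appeals to the just-proved criterion. Your argument instead computes $w^2\ep$ from $w\ep=-\ep+c$ with $c\in A^0$ and the fact that $w$ fixes $A^0$. Both are short; yours has the virtue of being self-contained (it does not cycle back through the ``if'' direction) and of sidestepping the $2$-torsion issue entirely, as you note.
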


\begin{proof} We first prove the first assertion in the statement.
Since $R^\times$ generates $A$, it is enough to show that, for each $\b\in R^\times$,
$w\b=\b$ if and only if $n$ is even and $\sum_{i=1}^{n}(-1)^i\sgn(\a_i)p(\a_i)=0$.
However, since the maps $p$ and $\sgn$ determine $w\b$ uniquely, the result immediately follows from
Lemma \ref{action lem}.

Now assume that $n$ is odd, then for $w^2=w_{\a_1}\cdots w_{\a_n}w_{\a_1}\cdots w_{\a_n}$, we have
\begin{eqnarray*}
&&\sum_{i=1}^{n} (-1)^i\sgn(\a_{i})p(\a_{i})
+\sum_{i=n+1}^{2n}(-1)^i\sgn(\a_{i-n})p(\a_{i-n})\\
&=&
\sum_{i=1}^{n}[(-1)^i+(-1)^{n+i}]\sgn(\a_{i})p(\a_{i})
\\
&=&0,
\end{eqnarray*}
where the last equality holds since $n$ is odd. So $w^2=1$ by the first assertion.
\end{proof}

Suggested by Proposition \ref{cor1}, we make the following definition.
\begin{DEF}\label{semi-relation}
Let $P$ be a subset of $R^\times$. We call a  $k$-tuple $(\a_1,\ldots,\a_k)$ of roots in $P$, an {\it alternating $k$-tuple in $P$} if $k$ is even
and $\sum_{j=1}^k(-1)^jsgn(\alpha_j)p(\alpha_j)=0$. We denote by $\alt(P)$, the set of all alternating $k$-tuples in $P$ for all $k$.
By Proposition \ref{cor1}, if $(\a_1,\ldots,\a_k)\in\alt(P)$, then $w_{\a_1}\cdots w_{\a_k}=1$ in $\w$.
\end{DEF}

\begin{cor}\label{action cor}
For $\alpha_1,\dots,\alpha_t\in R^\times$ we have, as elements of $\w$,
$$w_{\alpha_1}\cdots w_{\alpha_t}=w_{\alpha_1}\cdots w_{\alpha_{i-1}}w_{\alpha_{i+2}}w_{\alpha_{i+1}}w_{\alpha_i}w_{\alpha_{i+3}}\cdots w_{\alpha_t},$$
where $1\leq i\leq t-2$.
\end{cor}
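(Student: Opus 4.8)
The plan is to reduce the full statement to a local three-reflection identity and then read it off from Proposition \ref{cor1}. Since the prefix $w_{\alpha_1}\cdots w_{\alpha_{i-1}}$ and the suffix $w_{\alpha_{i+3}}\cdots w_{\alpha_t}$ are identical on both sides of the claimed equality, it suffices by left and right multiplication in the group $\w$ to prove the single equation
$$w_{\alpha_i}w_{\alpha_{i+1}}w_{\alpha_{i+2}}=w_{\alpha_{i+2}}w_{\alpha_{i+1}}w_{\alpha_i}.$$
I would abbreviate $\a=\alpha_i$, $\b=\alpha_{i+1}$, $\gamma=\alpha_{i+2}$, so the goal is $w_\a w_\b w_\gamma=w_\gamma w_\b w_\a$.

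The key observation is that the right-hand side is exactly the inverse of the left-hand side. Indeed, by the involution relations recorded in \eqref{Conj Pro} we have $w_\a^2=w_\b^2=w_\gamma^2=1$, hence
$$(w_\a w_\b w_\gamma)^{-1}=w_\gamma^{-1}w_\b^{-1}w_\a^{-1}=w_\gamma w_\b w_\a.$$
Therefore the desired identity $w_\a w_\b w_\gamma=w_\gamma w_\b w_\a$ is equivalent to the assertion that $(w_\a w_\b w_\gamma)^2=1$ in $\w$.

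At this point the work is already done: $w_\a w_\b w_\gamma$ is a product of $n=3$ reflections based on roots of $R^\times$, and $n$ is odd, so the ``in particular'' clause of Proposition \ref{cor1} gives $(w_\a w_\b w_\gamma)^2=1$ immediately. Combining this with the inverse computation above yields the three-reflection identity, and reinstating the common prefix and suffix gives the full statement for each $1\leq i\leq t-2$.

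I do not expect a genuine obstacle here; the only point requiring care is the reduction step, where one must be sure that multiplying by the prefix and suffix is legitimate equality in the group $\w$ (rather than at the level of words), which is exactly why the statement is phrased ``as elements of $\w$''. The real content is the recognition that reversing an odd block of reflections amounts to inverting that block, so that the corollary is nothing more than the square-triviality of odd products furnished by Proposition \ref{cor1}.
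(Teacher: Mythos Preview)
Your proof is correct and is essentially identical to the paper's argument: the paper also cancels the common prefix and suffix, observes that the remaining equality $w_{\alpha_i}w_{\alpha_{i+1}}w_{\alpha_{i+2}}=w_{\alpha_{i+2}}w_{\alpha_{i+1}}w_{\alpha_i}$ is equivalent to $(w_{\alpha_i}w_{\alpha_{i+1}}w_{\alpha_{i+2}})^2=1$, and then invokes the odd-$n$ clause of Proposition~\ref{cor1}. Your version is simply a more explicit write-up of the same steps.
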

\begin{proof} Clearly equality holds if and only if $w_{\a_i}w_{\a_{i+1}}w_{\a_{i+2}}=w_{\a_{i+2}}w_{\a_{i+1}}w_{\a_{i}}$ and in turn, this holds
if and only if $(w_{\a_i}w_{\a_{i+1}}w_{\a_{i+2}})^2=1$, which holds by Proposition \ref{cor1}.
\end{proof}

For our next result, we recall a definition from \cite[Defintion 1.19 (ii)]{AYY}.
First we recall that for a subset $S$ of $A$, $\w_S$ is by definition the subgroup of $\w$
generated by all reflections $w_\a$, $\a\in S$.

\begin{DEF}\label{reflectable def}
Let $P\subseteq R^\times$.
\begin{itemize}
\item[(i)] The set $P$ is called a {\it reflectable set}, if $\w_PP=R^\times$.
\item[(ii)] The set $P$ is called a {\it reflectable base}, if $P$ is a reflectable set
and no proper subset of $P$ is a reflectable set.
\end{itemize}
\end{DEF}
Obviously, if $P$ is a reflectable set, then $\w=\w_P$. In \cite{AYY}, reflectable sets and reflectable
bases are characterized for tame irreducible affine reflection systems of reduced types. As it is shown
in \cite{AEAWG}, one might find finite reflectable sets and bases for many interesting affine reflection systems, such as
extended affine root systems.

Here is our main result of this section which provides a presentation for
the Weyl group of an affine reflection system of type $A_1$.

\begin{thm}\label{W(V) AP}
Let $(A,\fm,R)$ be an affine reflection system of type $A_1$ and
$\Pi\subseteq R^\times$ be a reflectable set for $R$. Then, $\w$ is isomorphic to the group $G$,
defined by
\begin{itemize}
\item[] generators: $x_\alpha$, $\alpha\in\Pi$,
\item[] relations: $x_{\alpha_1}\cdots x_{\alpha_k}$, $(\a_1,\ldots,\a_k)\in\alt(\Pi)$.
\end{itemize}
\end{thm}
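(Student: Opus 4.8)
The plan is to build the obvious homomorphism $\psi\colon G\to\w$ determined by $x_\alpha\mapsto w_\alpha$ and to prove it is an isomorphism, with the entire force of the injectivity resting on the exact triviality criterion already established in Proposition \ref{cor1}.

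First I would check that $\psi$ is well defined. A defining relator of $G$ has the form $x_{\alpha_1}\cdots x_{\alpha_k}$ with $(\alpha_1,\ldots,\alpha_k)\in\alt(\Pi)$, and by Definition \ref{semi-relation} this means precisely that $k$ is even and $\sum_{j=1}^k(-1)^j\sgn(\alpha_j)p(\alpha_j)=0$. This is exactly the hypothesis under which Proposition \ref{cor1} yields $w_{\alpha_1}\cdots w_{\alpha_k}=1$ in $\w$, so every relator maps to the identity and $\psi$ extends to a group homomorphism. Surjectivity is then immediate: since $\Pi$ is a reflectable set we have $\w=\w_\Pi$, which is generated by the reflections $w_\alpha$, $\alpha\in\Pi$, all lying in the image of $\psi$.

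Before treating injectivity I would record that each $x_\alpha$ is an involution in $G$, because the pair $(\alpha,\alpha)$ trivially satisfies the alternating condition and hence $x_\alpha^2$ is among the relators; consequently $x_\alpha^{-1}=x_\alpha$ and every element of $G$ may be written as a positive word $x_{\alpha_1}\cdots x_{\alpha_n}$ with $\alpha_i\in\Pi$. Now let $g=x_{\alpha_1}\cdots x_{\alpha_n}$ lie in $\ker\psi$, so that $w_{\alpha_1}\cdots w_{\alpha_n}=1$ in $\w$. By Proposition \ref{cor1} the integer $n$ is even and $\sum_{i=1}^n(-1)^i\sgn(\alpha_i)p(\alpha_i)=0$, that is, $(\alpha_1,\ldots,\alpha_n)\in\alt(\Pi)$. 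Thus the word representing $g$ is itself one of the defining relators of $G$, forcing $g=1$ in $G$. Hence $\ker\psi$ is trivial and $\psi$ is an isomorphism.

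The argument is essentially formal once Proposition \ref{cor1} is in hand, so I expect no deep obstacle; the only points demanding care are the reduction to positive words (handled by the involution observation) and the fact that a uniform appeal to Proposition \ref{cor1} automatically absorbs any coincidences of the form $w_\alpha=w_{\alpha'}$ for distinct $\alpha,\alpha'\in\Pi$, which a more hands-on analysis would otherwise have to treat separately. Note finally that the reflectable hypothesis on $\Pi$ is used only to guarantee surjectivity and plays no role in the injectivity step.
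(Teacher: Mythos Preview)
Your proof is correct and follows essentially the same approach as the paper: both construct the epimorphism $G\to\w$ sending $x_\alpha\mapsto w_\alpha$, verify it is well defined and surjective via the reflectable hypothesis, and deduce injectivity by applying Proposition~\ref{cor1} to an arbitrary positive word in the kernel. Your version is slightly more explicit in justifying the reduction to positive words via the involution relation $(\alpha,\alpha)\in\alt(\Pi)$, which the paper leaves implicit.
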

\begin{proof}
By definition of a reflectable set we have $\w=\la w_\a\mid \a\in\Pi\ra$.
Let $x_{\alpha_1}\cdots x_{\alpha_k}$ be a defining relation in $G$. Then by definition, $k$
is even, $\sum_{j=1}^k(-1)^j\sgn(\alpha_j)p(\alpha_j)=0$ and $\a_i\in\Pi$ for $1\leq i\leq k$. By Proposition \ref{cor1},
we have $w_{\alpha_1}\cdots w_{\alpha_k}=1$.
So, the assignment $x_\a\longmapsto w_\a$, $\a\in\Pi$ induces an epimorphism
$\phi:G\longrightarrow\w$. Let $x=x_{\alpha_1}\cdots x_{\alpha_k}\in\mathrm{Ker}\phi$, $\a_i\in\Pi$, then we have
$$1=\phi(x)=w_{\alpha_1}\cdots w_{\alpha_k}.$$ By Proposition \ref{cor1},
$k$ is even and $p(\sum_{j=1}^k(-1)^j\sgn(\alpha_j)\alpha_j)=0$. Therefore, we have
$(\a_1,\ldots,\a_k)\in \alt(\Pi)$ and so $\phi$ is an isomorphism.
\end{proof}

\begin{rem}
Let $\Pi'$ be obtained from $\Pi$ by changing the sign of elements of a subset of $\Pi$.
Then it is clear that $\Pi$ is a reflectable set
if and only if $\Pi'$ is a reflectable set. So
without loss of generality we may assume that
$\Pi\sub\epsilon+S$. Now if we set $\widehat{\Pi}=p(\Pi)$, then the above
presentation can be written in the form:
\begin{itemize}
\item[] generators: $x_\tau$, $\tau\in\widehat\Pi$,
\item[] relations: $x_{\tau_1}\cdots x_{\tau_k}$, $k$ is even and  $\sum_{j=1}^k(-1)^j\tau_j=0.$
\end{itemize}
\end{rem}

\section{\bf Hyper-alternating presentation}\setcounter{equation}{0}\label{hweyl}
Throughout this section, we assume that $A$ is a torsion-free abelian group. Let $(A,\fm,R)$ be an affine reflection system of type $A_1$. As in Section \ref{ARS Def}, let $\v=\mathbb Q\otimes_{\mathbb Z} A$ and consider the basis $\bes=\{\epsilon\}\cup\bes^0$ of $\v$ where $\bes^0=\{\sigma_j\mid j\in J\}$ is a basis of $\v^0$. Let $(\bes^0)^*=\{\lambda_j\mid j\in J\}$ be the dual basis of $\mathfrak{B}^0$ defined in Section \ref{ARS Def} and consider the corresponding hyperbolic extension $\vt=\v\oplus(\v^0)^\dag$ of $\v$.
Then as we have already seen, since $A$ is torsion-free, we are able to identify $R$ with $1\otimes R$ as a subset of
$\v\subseteq\vt$. This is done in order to study the hyperbolic Weyl group $\wt$ of $R$, see Definition \ref{wt}.
We fix a reflectable set
$$\Pi=\{\alpha_i\mid i\in I\}\subseteq R^\times$$
where $I$ is an index set. Recall that $p:A\longrightarrow A^0$ is the projection with respect to
the decomposition $A=\dot A\oplus A^0$ of $A$. Since $A$ is identified with the subgroup $1\otimes A$ of $\v$, one might consider $p$ as
the restriction of the projection $\v\longrightarrow \v^0$, with respect to the decomposition
$\v=\vd\oplus \v^0$.
For $\a\in R$, let $p_j(\a)\in\mathbb Q$ be the $j$-th coordinate of $p(\a)$ with respect to the basis $\bes^0$ of $\v^0$,
 namely
$$p(\a)=\sum_{j\in J}p_j(\a)\sigma_j,$$
where $p_j(\a)=0$, for all but a finite number of $j\in J$.

Now let $(\a_1,\ldots,\a_k)\in\alt(\Pi)$. Then

$$0=\sum_{i=1}^k(-1)^i\sgn(\a_i)p(\a_i)=\sum_{j\in J}\sum_{i=1}^k(-1)^i\sgn(\a_i)p_j(\a_i)\sg_j.$$ So for
$s\in J$, we have
\begin{eqnarray*}
0&=&\lam_s\big(\sum_{j\in J}\sum_{i=1}^k(-1)^i\sgn(\a_i)p_j(\a_i)\sg_j\big)\\
&=&\sum_{j\in J}\sum_{i=1}^k(-1)^i\sgn(\a_i)p_j(\a_i)\lam_s(\sg_j)\\
&=&\sum_{i=1}^k(-1)^i\sgn(\a_i)p_s(\a_i).
\end{eqnarray*}
Summarizing the above discussion, we have
\begin{equation}\label{eq5}
(\a_1,\ldots,\a_k)\in\alt(\Pi)\Longleftrightarrow\sum_{i=1}^k(-1)^i\sgn(\a_i)p_s(\a_i)=0
\hbox{ for all }s\in J.
\end{equation}

For our next result, we note that for each $j\in J$ and $w\in\wt$, we have $\lam_j-w(\lam_j)\in\v$.
\begin{lem}\label{W AR Lem}
Let $w=w_{\alpha_{1}}\cdots w_{\alpha_{k}}\in\wt$, $\a_i\in\Pi$. Then, for $j\in J$, we have
\begin{equation*}
\sgn(\lam_j-w(\lam_j))=\sum_{i=1}^k(-1)^{i}\sgn(\alpha_{i})p_j(\a_i)
\end{equation*}
and
\begin{equation*}
p(\lam_j-w(\lam_j))=\sum_{s=1}^kp_j(\a_s)p(\a_s)+2\sum_{s=2}^k(-1)^sp_j(\a_s)\sgn(\alpha_{s})
\sum_{r=1}^{s-1}(-1)^{r}\sgn(\alpha_{r})p(\a_r).
\end{equation*}
\end{lem}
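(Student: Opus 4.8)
The plan is to reduce the lemma to a single explicit reflection computation on the $\lam_j$ and then run an induction on the length $k$ of the word $w=w_{\a_1}\cdots w_{\a_k}$, peeling off the \emph{last} reflection at each step.

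First I would record the needed values of the extended form. For $\a\in R^\times$ one has $\a=\sgn(\a)\ep+p(\a)$ with $\sgn(\a)=\pm1$, since $R^\times=(\ep+S)\cup(-\ep+S)$ by \eqref{type-A_1}; hence $(\a,\a)=2\sgn(\a)^2=2$ and $(\b,\a^\vee)=(\b,\a)$ for every $\b$. Because $(\ep,(\v^0)^\dag)=\{0\}$ and $(\sg_i,\lam_j)=\d_{ij}$, this yields $(\lam_j,\a)=p_j(\a)$, so the basic reflection formula is
\[
w_\a(\lam_j)=\lam_j-(\lam_j,\a^\vee)\a=\lam_j-p_j(\a)\a,\qquad\a\in R^\times .
\]
In particular $\lam_j-w_\a(\lam_j)=p_j(\a)\a=p_j(\a)\sgn(\a)\ep+p_j(\a)p(\a)$, which gives the base case $k=1$ and confirms that $\lam_j-w(\lam_j)\in\v$, so that $\sgn$ and $p$ may be applied to it.

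For the inductive step I would write $w=w''w_{\a_k}$ with $w''=w_{\a_1}\cdots w_{\a_{k-1}}$. Applying the basic formula and the $\mathbb Q$-linearity of $w''$ gives
\[
\lam_j-w(\lam_j)=\big(\lam_j-w''(\lam_j)\big)+p_j(\a_k)\,w''(\a_k).
\]
The first summand is handled by the inductive hypothesis for the length-$(k-1)$ word $w''$; the second is handled by Lemma \ref{action lem} applied to $\b=\a_k\in R^\times$ (legitimate because $w''$ restricts on $\v$ to the corresponding element of $\w$), which supplies $\sgn(w''\a_k)=(-1)^{k-1}\sgn(\a_k)$ together with the explicit value of $p(w''\a_k)$. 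Splitting the displayed identity into its $\ep$-component and its $\v^0$-component then produces the two asserted formulas separately.

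Taking $\ep$-components gives the $\sgn$ identity almost immediately: $\sgn(\lam_j-w(\lam_j))=\sgn(\lam_j-w''(\lam_j))+p_j(\a_k)\sgn(w''\a_k)$, so the inductive value plus Lemma \ref{action lem} appends exactly one new summand. The substantive half is the $p$ identity. Here I would substitute
\[
p(w''\a_k)=p(\a_k)-2(-1)^{k-1}\sgn(\a_k)\sum_{i=1}^{k-1}(-1)^i\sgn(\a_i)p(\a_i)
\]
from Lemma \ref{action lem}, multiply by $p_j(\a_k)$, and add the outcome to the inductive double sum. I expect the one real obstacle to be purely organizational: one must verify that $p_j(\a_k)p(\a_k)$ extends the diagonal sum $\sum_s p_j(\a_s)p(\a_s)$ from range $s\le k-1$ to range $s\le k$, while $2(-1)^k p_j(\a_k)\sgn(\a_k)\sum_{r=1}^{k-1}(-1)^r\sgn(\a_r)p(\a_r)$ is exactly the missing $s=k$ summand of the target double sum $2\sum_{s=2}^{k}(-1)^s p_j(\a_s)\sgn(\a_s)\sum_{r=1}^{s-1}(-1)^r\sgn(\a_r)p(\a_r)$. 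Once these two index-range matches are checked the induction closes. Beyond keeping the alternating signs and nested summation ranges straight, the only genuine input is Lemma \ref{action lem}, which is precisely what converts the action of the outer word $w''$ on the root $\a_k$ into the nested alternating sum.
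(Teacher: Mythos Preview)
Your proposal is correct and follows essentially the same route as the paper: both arguments compute $(\lam_j,\a)=p_j(\a)$, peel off the rightmost reflection to write $\lam_j-w(\lam_j)=(\lam_j-w''(\lam_j))+p_j(\a_k)\,w''(\a_k)$ with $w''=w_{\a_1}\cdots w_{\a_{k-1}}$, and then feed in the induction hypothesis together with Lemma~\ref{action lem} for $w''(\a_k)$. Your write-up is in fact a bit more careful about the base case and the identity $(\lam_j,\a)=p_j(\a)$, but the structure is identical.
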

\begin{proof}
We have $w_{\a_k}(\lam_j)=\lam_j-(\lam_j,\a_k)\a_k$ so
$$p(w_{\a_k}(\lam_j)-\lam_j)=(\lam_j,\a_k)p(\a_k)=\lam_j(p(\a_k))p(\a_k)=p_j(\a_k)p(\a_k).$$
Now if $w'=w_{\a_1}\cdots w_{\a_{k-1}}$, then
\begin{equation*}
w(\lam_j)=w'(\lambda_j-p_j(\a_k)\alpha_{k})=w'(\lam_j)-p_j(\a_k)w'(\a_k)
\end{equation*}
Then by induction steps and Lemma \ref{action lem}, we have
\begin{eqnarray*}
p(\lam_j-w(\lam_j))&=&p(\lam_j-w'(\lam_j))+p_j(\a_k)p(w'(\a_k))\\
&=&
\sum_{s=1}^{k-1}p_j(\a_s)p(\a_s)+2\sum_{s=2}^{k-1}(-1)^s\sgn(\alpha_{s})
\sum_{r=1}^{s-1}(-1)^{r}\sgn(\alpha_{r})p(\a_r)\\
&&+ p_j(\a_k)\big(p(\a_k)-2(-1)^{k-1}\sgn(\a_k)\sum_{r=1}^{k-1}(-1)^r\sgn(\a_r)p(\a_r)\big)\\
&=&\sum_{s=1}^{k}p_j(\a_s)p(\a_s)+2\sum_{s=2}^{k}(-1)^s\sgn(\alpha_{s})
\sum_{r=1}^{s-1}(-1)^{r}\sgn(\alpha_{r})p(\a_r).
\end{eqnarray*}
Also
\begin{eqnarray*}
\sgn(\lam_j-w(\lam_j))&=&\sgn(w'(\lam_j))-p_j(\a_k)\sgn(w'(\a_k))\\
&=&\sum_{i=1}^{k-1}(-1)^{i}\sgn(\alpha_{i})p_j(\a_i)-(-1)^{k-1}p_j(\a_k)\sgn(\a_k)\\
&=&\sum_{i=1}^k(-1)^i\sgn(\a_i)p_j(\a_i).
\end{eqnarray*}
\end{proof}

The following is now clear from Lemma \ref{W AR Lem}.
\begin{pro}\label{W AR}
Let $w=w_{\alpha_{1}}\cdots w_{\alpha_{k}}\in\wt$, $\a_i\in R^\times$. Then $w=1$ in $\wt$
if and only if $k$ is even and for all $j\in J$,
\begin{equation}\label{eq6}
\sum_{i=1}^k(-1)^{i}\sgn(\alpha_{i})p_j(\a_i)=0
\end{equation}
and
\begin{equation}\label{eq7}
\sum_{s=1}^kp_j(\a_s)p(\a_s)+2\sum_{s=2}^k(-1)^sp_j(\a_s)\sgn(\alpha_{s})
\sum_{r=1}^{s-1}(-1)^{r}\sgn(\alpha_{r})p(\a_r)=0.
\end{equation}
\end{pro}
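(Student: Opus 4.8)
The plan is to reduce the vanishing of $w$ on the whole hyperbolic extension $\vt$ to its vanishing on the two complementary pieces $\v$ and $(\v^0)^\dag$, and then to read off the stated conditions directly from Lemma \ref{W AR Lem}, where all the computational content already lives. Since $\vt=\v\oplus(\v^0)^\dag$ and $(\v^0)^\dag$ is spanned by the dual vectors $\lam_j$, $j\in J$, a linear endomorphism of $\vt$ is the identity exactly when it fixes $\v$ pointwise and fixes every $\lam_j$. Thus I would first establish the equivalence
$$w=1 \text{ in } \wt \iff w|_\v=\mathrm{id}_\v \ \text{ and }\ w(\lam_j)=\lam_j \ \text{ for all } j\in J.$$

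First I would handle the restriction to $\v$. Under the epimorphism $\varphi\colon\wt\longrightarrow\w$, the element $w|_\v$ is precisely $\varphi(w)=w_{\a_1}\cdots w_{\a_k}\in\w$, so $w|_\v=\mathrm{id}_\v$ iff this product equals $1$ in $\w$. By Proposition \ref{cor1} this holds iff $k$ is even and $\sum_{i=1}^k(-1)^i\sgn(\a_i)p(\a_i)=0$, which by the equivalence (\ref{eq5}) is the same as requiring $k$ even together with (\ref{eq6}) for every $j\in J$. In particular the parity condition is forced here; it also follows at once from $\sgn(w\b)=(-1)^k\sgn(\b)$ in Lemma \ref{action lem}, since $\sgn(\b)\neq 0$ for $\b\in R^\times$.

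Next I would treat the action on the $\lam_j$. The key observation, recorded just before Lemma \ref{W AR Lem}, is that $\lam_j-w(\lam_j)\in\v$ for each $j$; this is because each reflection $w_\a$ with $\a\in\v$ moves $\lam_j$ by $(\lam_j,\a^\vee)\a\in\v$ and leaves $\v$ invariant. Since an element of $\v=\vd\oplus\v^0$ is zero iff both its $\sgn$-component and its $p$-component vanish, I conclude that $w(\lam_j)=\lam_j$ iff $\sgn(\lam_j-w(\lam_j))=0$ and $p(\lam_j-w(\lam_j))=0$. Lemma \ref{W AR Lem} evaluates these two components, identifying $\sgn(\lam_j-w(\lam_j))$ with the left-hand side of (\ref{eq6}) and $p(\lam_j-w(\lam_j))$ with the left-hand side of (\ref{eq7}); hence $w$ fixes every $\lam_j$ iff (\ref{eq6}) and (\ref{eq7}) hold for all $j\in J$. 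Combining the two reductions gives the claim.

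I do not anticipate a genuine obstacle, as the heavy lifting is done in Lemma \ref{W AR Lem}. The two points requiring care are the splitting $\vt=\v\oplus(\v^0)^\dag$ together with the fact $\lam_j-w(\lam_j)\in\v$, so that this difference is detected by the pair $(\sgn,p)$, and the bookkeeping around parity: although $w|_\v=\mathrm{id}_\v$ already forces $k$ even, one must keep ``$k$ even'' as an explicit hypothesis, since (\ref{eq6}) and (\ref{eq7}) alone do not imply it. For instance, with $k=1$ and $\a_1=\ep$ one has $p(\a_1)=0$, so both (\ref{eq6}) and (\ref{eq7}) hold, yet $w=w_\ep\neq 1$.
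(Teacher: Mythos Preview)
Your proposal is correct and follows essentially the same route as the paper: split $\vt=\v\oplus(\v^0)^\dag$, use Proposition \ref{cor1} (with (\ref{eq5})) to characterize $w|_\v=\mathrm{id}_\v$ as ``$k$ even and (\ref{eq6})'', and use Lemma \ref{W AR Lem} to characterize $w(\lam_j)=\lam_j$ as ``(\ref{eq6}) and (\ref{eq7})''. Your added remark that (\ref{eq6}) and (\ref{eq7}) alone do not force $k$ even (e.g.\ $k=1$, $\a_1=\ep$) is a useful clarification not spelled out in the paper.
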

\begin{proof}
Let $w$ be as in the statement.
From definition of $\vt$, it is clear that
$w=1$ in $\wt$ if and only if $w=1$ in $\w$ and $w(\lam_j)=\lam_j$ for all $j\in J$.
Now from Proposition \ref{cor1}, we have $w=1$ in $\w$ if and only if $k$ is even and (\ref{eq6}) is satisfied.
Also by Lemma \ref{W AR Lem}, $w(\lam_j)=\lam_j$ if and only if (\ref{eq6}) and (\ref{eq7}) hold.
\end{proof}

\begin{rem}\label{rem3}
Since $w_\a=w_{-\a}$, we may assume in Proposition \ref{W AR} that $\sgn(\a_i)=1$ for all $i$. Then
the statement can be written in the form; $w=1$ in $\wt$ if and only if $k$ is even and for all $j\in J$,
\begin{equation*}
\sum_{i=1}^k(-1)^{i}p_j(\a_i)=0
\end{equation*}
and
\begin{equation*}
\sum_{s=1}^kp_j(\a_s)p(\a_s)+2\sum_{s=2}^k(-1)^sp_j(\a_s)
\sum_{r=1}^{s-1}(-1)^{r}p(\a_r)=0.
\end{equation*}
\end{rem}

Suggested by Proposition \ref{W AR Lem}, we make the following definition.
\begin{DEF}\label{def4}
Let $P$ be a subset of $R^\times$. We call $(\a_1,\ldots,\a_k)$, $\a_i\in P$, a {\it hyper-alternating $k$-tuple} in $P$ if $k$ is even and
(\ref{eq6}) and (\ref{eq7}) hold, for all $j\in J$.
We denote by $\talt(P)$ the set of all hyper-alternating  $k$-tuples in $P$, for all $k$.
By Proposition \ref{W AR}, $w_{\a_1}\cdots w_{\a_k}=1$ in $\wt$ if and only if $(\a_1,\ldots,\a_k)\in\talt(R^\times)$.
\end{DEF}

Using an argument similar to the proof of Theorem \ref{W(V) AP}, we summarize the results of this section in the following theorem.

\begin{thm}\label{W AP}
Let $\Pi\subseteq R^\times$ be a reflectable set for $R$. Then, $\wt$ is isomorphic
to the presented group $G$, defined by
\begin{itemize}
\item[] generators: $y_{\alpha},\;\a\in\Pi$,
\item[] relations: $y_{\a_1}\cdots y_{\a_k}$, $(\a_1,\ldots,\a_k)\in\talt(\Pi)$.
\end{itemize}
\end{thm}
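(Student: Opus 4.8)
The plan is to mimic the proof of Theorem \ref{W(V) AP}, replacing the role of Proposition \ref{cor1} by Proposition \ref{W AR}. Since $\Pi$ is a reflectable set, $\wt=\wt_\Pi=\la w_\a\mid \a\in\Pi\ra$, so the assignment $y_\a\longmapsto w_\a$, $\a\in\Pi$, certainly defines a surjective homomorphism from the free group on the generators onto $\wt$, provided it respects the relations. First I would check that each defining relator maps to the identity: if $y_{\a_1}\cdots y_{\a_k}$ is a defining relation, then by Definition \ref{def4} we have $(\a_1,\ldots,\a_k)\in\talt(\Pi)\sub\talt(R^\times)$, and hence $w_{\a_1}\cdots w_{\a_k}=1$ in $\wt$ by Proposition \ref{W AR}. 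This shows the assignment induces a well-defined epimorphism $\phi:G\longrightarrow\wt$.

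The substance is the injectivity of $\phi$. Here I would take an arbitrary element $x\in\mathrm{Ker}\,\phi$ and write it as a word $x=y_{\a_1}\cdots y_{\a_k}$ with $\a_i\in\Pi$ (any element of $G$ is represented by such a word, since the generators are involutions, which follows from $(\a)\in\talt(\Pi)$ being a relation when $\a=\a$, or more directly from $w_\a^2=1$). Then $1=\phi(x)=w_{\a_1}\cdots w_{\a_k}$ in $\wt$, so by Proposition \ref{W AR} the integer $k$ is even and the conditions (\ref{eq6}) and (\ref{eq7}) hold for all $j\in J$. By Definition \ref{def4} this says exactly that $(\a_1,\ldots,\a_k)\in\talt(\Pi)$, so $y_{\a_1}\cdots y_{\a_k}$ is a defining relator of $G$, whence $x=1$ in $G$. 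Therefore $\mathrm{Ker}\,\phi$ is trivial and $\phi$ is an isomorphism.

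I expect the one genuinely delicate point to be the reduction to a word with letters only from $\Pi$, together with tracking signs. In Theorem \ref{W(V) AP} the analogous issue is handled by the remark that $w_\a=w_{-\a}$ (used in Remark \ref{rem3}) and the freedom to assume $\Pi\sub\epsilon+S$, so one never needs $y_{-\a}$ as a separate generator; I would confirm that the hyper-alternating conditions (\ref{eq6}) and (\ref{eq7}) are insensitive to replacing $\a_i$ by $-\a_i$ (equivalently flipping $\sgn(\a_i)$ and $p_j(\a_i)$ simultaneously), exactly as noted in Remark \ref{rem3}, so that the correspondence between words in the generators $y_\a$ and products of reflections $w_\a$ is clean. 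Once this bookkeeping is in place, the argument is a verbatim transcription of the proof of Theorem \ref{W(V) AP} with $\alt$ and Proposition \ref{cor1} replaced by $\talt$ and Proposition \ref{W AR}, which is presumably why the authors describe it as ``an argument similar to the proof of Theorem \ref{W(V) AP}.''
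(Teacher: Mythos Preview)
Your proposal is correct and is exactly the argument the paper intends: it explicitly states that Theorem \ref{W AP} follows by ``an argument similar to the proof of Theorem \ref{W(V) AP},'' i.e., by replacing Proposition \ref{cor1} and $\alt$ with Proposition \ref{W AR} and $\talt$, which is precisely what you do. The only slip is notational: the involution relation comes from the $2$-tuple $(\a,\a)\in\talt(\Pi)$, not a $1$-tuple, but your intent is clear and the verification is straightforward.
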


We note that if $w=w_{\a_1}\cdots w_{\a_k}$, $\a_i$'s in $R^\times$, then depending on the context we may consider $w$ either
as an element of $\w$ or $\wt$.

\begin{pro}\label{SM lem}
Let $R^0\not=\{0\}$ and $w:=w_{\a_1}\cdots w_{\a_k}$, $\a_i$'s in $R^\times$.
The following statements are equivalent.
\begin{itemize}
\item[(i)] $(\a_1,\ldots,\a_k)\in\alt(R^\times)$,
\item[(ii)] $w=1$ in $\w$ (or equivalently $w_{\mid_\v}=\mathrm{id}_\v$),
\item[(iii)] if $w=w_{\b_1}\cdots w_{\b_m}$, $\b_i$'s in $R^\times$, then $(\b_1,\ldots,\b_m)\in\alt(R^\times)$,
\item[(iv)] $w\in Z(\wt)$.
\end{itemize}
\end{pro}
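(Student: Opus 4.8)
The plan is to prove the cycle of implications $(i)\Rightarrow(ii)\Rightarrow(iii)\Rightarrow(i)$ first, establishing the equivalence of the three conditions about $\w$, and then to treat $(ii)\Leftrightarrow(iv)$ separately as the genuinely new content. The equivalences among $(i)$, $(ii)$, $(iii)$ are essentially immediate from the machinery already built: by Proposition \ref{cor1}, condition $(i)$ is \emph{defined} to be equivalent to $w=1$ in $\w$, which is $(ii)$; the parenthetical reformulation $w_{\mid_\v}=\mathrm{id}_\v$ holds because $\w$ acts faithfully on $\v$ (indeed $\w\subseteq\Aut(A)\subseteq\Aut(\v)$). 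For $(ii)\Rightarrow(iii)$, if $w=1$ in $\w$ then any expression $w=w_{\b_1}\cdots w_{\b_m}$ also equals $1$ in $\w$, so Proposition \ref{cor1} forces $(\b_1,\ldots,\b_m)\in\alt(R^\times)$; and $(iii)\Rightarrow(i)$ is trivial since the original $(\a_1,\ldots,\a_k)$ is itself one of the admissible expressions for $w$.

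The substantive step is $(ii)\Leftrightarrow(iv)$, relating triviality in $\w$ to centrality in $\wt$. First I would set up the relationship between the two groups: the restriction map $\varphi\colon\wt\to\w$ is the epimorphism noted after Definition \ref{wt}, and its kernel consists of those $w\in\wt$ acting trivially on $\v$. For the direction $(iv)\Rightarrow(ii)$, I would argue that the center of $\wt$ is contained in $\ker\varphi$: using the conjugation formula $ww_\alpha w^{-1}=w_{w(\alpha)}$ from \eqref{Conj Pro}, if $w$ is central then $w_{w(\alpha)}=w_\alpha$ for all $\alpha\in R^\times$, and reading this on $\v$ (where the roots live and the form is positive semidefinite but controls the non-isotropic directions) should pin down $w(\bar\alpha)=\pm\bar\alpha$ and then, via the action on the whole reflectable set, force $w_{\mid_\v}=\mathrm{id}_\v$. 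The hypothesis $R^0\neq\{0\}$ enters precisely here: it guarantees the isotropic part of the lattice is nontrivial, so that $\wt$ is a proper (nontrivial) extension of $\w$ and the centrality condition is not vacuous; I would need to check that a nontrivial isotropic direction yields enough reflections $w_\alpha$ with $\alpha$ varying its $p$-component to separate points of $\v$ under the conjugation constraint.

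For the harder direction $(ii)\Rightarrow(iv)$, suppose $w_{\mid_\v}=\mathrm{id}_\v$, i.e.\ $w\in\ker\varphi$; I must show $w$ commutes with every generator $w_\alpha$, $\alpha\in R^\times$. By \eqref{Conj Pro} this amounts to showing $w_{w(\alpha)}=w_\alpha$ for all such $\alpha$. Since $w$ fixes $\v$ pointwise and $\alpha\in\v$, we have $w(\alpha)=\alpha$, whence $w_{w(\alpha)}=w_\alpha$ trivially, giving $ww_\alpha w^{-1}=w_\alpha$; as the $w_\alpha$ generate $\wt$, this yields $w\in Z(\wt)$. This is clean once one observes that the generators of $\wt$ are built from vectors $\alpha\in\v$ (only the \emph{reflection operators} act on the larger space $\vt$), so the pointwise-triviality of $w$ on $\v$ is exactly what is needed.

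The main obstacle will be the direction $(iv)\Rightarrow(ii)$: showing that centrality forces trivial action on $\v$, and in particular making rigorous why $R^0\neq\{0\}$ is the right hypothesis. I expect the cleanest route is to exhibit an explicit element of $\wt$ that fails to commute with a given nontrivial $w\in\ker(\text{action on }\vt)\setminus\ker\varphi$, using that $\ker\varphi$ acts on $(\v^0)^\dag$ by the explicit formula from Lemma \ref{W AR Lem}; one then checks that a nonzero element of $\ker\varphi$ moves some $\lambda_j$ by a vector in $\v^0$, and that conjugating a suitable reflection $w_\alpha$ detects this motion. Verifying that this detection is possible precisely when $R^0\neq\{0\}$—so that there is a $\sigma_j$ and hence a $\lambda_j$ available—is the one place where I would slow down and compute with the formulas of Lemma \ref{W AR Lem} rather than argue abstractly.
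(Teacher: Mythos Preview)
Your treatment of $(i)\Leftrightarrow(ii)\Leftrightarrow(iii)$ and of $(ii)\Rightarrow(iv)$ is correct and matches the paper exactly. For $(iv)\Rightarrow(ii)$ your first sketch is also on the right track---deduce $w_{z(\alpha)}=w_\alpha$, hence $z(\alpha)=\pm\alpha$ (using that $R$ is reduced), and then use $R^0\neq\{0\}$ to rule out the minus sign. This is precisely what the paper does.

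However, your final paragraph takes a wrong turn. The detour through Lemma~\ref{W AR Lem} and the action on $(\v^0)^\dag$ is unnecessary, and the phrase ``$w\in\ker(\text{action on }\vt)\setminus\ker\varphi$'' is incoherent (the action on $\vt$ is faithful, so that kernel is trivial). You also slightly misdiagnose the role of $R^0\neq\{0\}$: it is not there to make $\wt$ a ``proper extension'' of $\w$, but to exclude the possibility that a central $z$ acts as $-\mathrm{id}$ on $\vd$. The clean argument, which you are close to but do not quite state, uses one elementary fact you omit: \emph{every element of $\wt$ acts as the identity on $\v^0$}, since each generator $w_\alpha$ with $\alpha\in R^\times$ fixes $\v^0$ pointwise. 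Granting this, pick $0\neq\sigma\in S$ (possible since $R^0\neq\{0\}$); if $z(\epsilon+\sigma)=-(\epsilon+\sigma)$ then $z(\epsilon)+\sigma=-\epsilon-\sigma$, forcing $z(\epsilon)=-\epsilon-2\sigma$, which contradicts $z(\epsilon)=\pm\epsilon$. Hence $z(\epsilon+\sigma)=\epsilon+\sigma$, so $z(\epsilon)=\epsilon$, and since $z$ already fixes $\v^0$ it fixes all of $\v$. No computation with $\lambda_j$'s is needed.
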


\begin{proof}
The implications (i)$\Longleftrightarrow$(ii)$\Longleftrightarrow$(iii) follow at once from Proposition \ref{cor1}, or Theorem \ref{W(V) AP}.

(ii)$\Longrightarrow$(iv). Assume $w=1$ in $\w$. Therefore $w(\a)=\a$ for all $\a\in\v$ and so for any
$\a\in R^\times$ we have $ww_\alpha w^{-1}=w_{w(\a)}=w_\alpha.$
Thus $w\in Z(\widetilde{\w})$.

(iv)$\Longrightarrow$(ii). Let $z\in Z(\wt)$. For each $\alpha\in R^\times$, we have $zw_\alpha z^{-1}=w_\alpha$. By (\ref{Conj Pro}), we have $$w_{z(\alpha)}=w_\alpha.$$
From the definition of a reflection and the fact that $R$ is a reduced root system, we have $$z(\alpha)=\pm\alpha.$$
It is enough to show that $z$ acts as identity on $\epsilon+S$,
which is an spanning set of $\v$. We use the fact that each $w\in\widetilde{\w}$
acts as identity map on$\v^0$. Since $R^0\not=\{0\}$, there exists $0\not=\sg\in S$.
Then if $z(\epsilon+\sigma)=-\epsilon-\sigma$, we have $$z(\epsilon)+\sigma=-\epsilon-\sigma,$$
which is a contradiction in both cases $z(\epsilon)=\pm\epsilon$.
Thus $$z(\epsilon+\sigma)=\epsilon+\sigma.$$
Then
$$z(\epsilon)=z(\epsilon+\sigma-\sigma)=\epsilon+\sigma-\sigma=\epsilon.$$
\end{proof}

\begin{cor}\label{centext}
The group $\wt$ is a central extension of $\w$ by $Z(\wt)$, namely,
$$1\longrightarrow Z(\wt)\stackrel{i}{\hookrightarrow}\wt\stackrel{\varphi}{\longrightarrow}\w\longrightarrow 1$$
is a short exact sequence, where $\varphi(w)=w_{|_\v}$, for $w\in\wt$.
\end{cor}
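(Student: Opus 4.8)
The plan is to reduce the entire statement to the single identity $\ker\varphi=Z(\wt)$; once this is in hand, everything else in the short exact sequence is formal, and indeed that identity is precisely what Proposition \ref{SM lem} delivers.

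First I would recall that $\varphi:\wt\longrightarrow\w$, $\varphi(w)=w_{|_\v}$, was already shown to be a well-defined group epimorphism at the end of Section \ref{ARS Def}, where the embedding of the torsion-free group $A$ into $\v$ was used to restrict the action of $\wt$ back to $\v$. Surjectivity of $\varphi$ gives exactness at $\w$, and injectivity of the inclusion $i:Z(\wt)\hookrightarrow\wt$ gives exactness at $Z(\wt)$. Thus the only thing left to verify is exactness at the middle term, i.e. $\mathrm{im}(i)=\ker(\varphi)$, which amounts to $Z(\wt)=\ker\varphi$.

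To prove this, I would take an arbitrary $w\in\wt$. Since $\wt=\wt_R$ is by definition generated by the reflections $w_\a$ with $\a\in R^\times$, I may write $w=w_{\a_1}\cdots w_{\a_k}$ with all $\a_i\in R^\times$. Now $w\in\ker\varphi$ means exactly $w_{|_\v}=\mathrm{id}_\v$, which is condition (ii) of Proposition \ref{SM lem}; that proposition establishes (ii)$\Longleftrightarrow$(iv), and (iv) reads $w\in Z(\wt)$. Hence $\ker\varphi=Z(\wt)=\mathrm{im}(i)$, giving exactness at $\wt$. Finally, since $\mathrm{im}(i)=Z(\wt)$ is by definition the center of $\wt$, it commutes with all of $\wt$, so the extension is central, as claimed.

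There is no genuine obstacle here: all of the difficulty has been front-loaded into Proposition \ref{SM lem}, so the corollary is pure bookkeeping of the exact sequence. The one point that deserves attention is that Proposition \ref{SM lem} presupposes $R^0\neq\{0\}$, and this hypothesis is genuinely needed. When $R^0=\{0\}$ one has $\v^0=0$, hence $\vt=\v$ and $\wt=\w\cong\bbbz/2$, so that $\ker\varphi=\{1\}$ while $Z(\wt)=\wt$, and the asserted equality fails. Consequently the corollary is to be read under the standing assumption $R^0\neq\{0\}$.
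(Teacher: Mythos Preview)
Your proof is correct and follows the same approach as the paper: the paper simply cites that $\varphi$ is an epimorphism (from the end of Section~\ref{ARS Def}) and then invokes Proposition~\ref{SM lem} to get $\ker\varphi=Z(\wt)$. Your observation that Proposition~\ref{SM lem} requires $R^0\neq\{0\}$, and that the corollary genuinely fails otherwise, is a worthwhile addendum that the paper leaves implicit.
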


\begin{proof}
As it was mentioned at the end of Section \ref{ARS Def}, the map $\varphi$ is an epimorphism. Now, by Proposition  \ref{SM lem}, we have $\mathrm{Ker}\varphi=Z(\wt)$.
\end{proof}

One knows that an interesting subclass of affine reflection
systems is the class for which the center $Z(\wt)$ of $\wt$ is a
free abelian group. The next theorem shows that, in this case, the
existence of a presentation for $\w$ is equivalent to the
existence of a presentation for $\wt$. One should notice the
crucial role of $Z(\wt)$ in the proof.

\begin{con}\label{convention}
Suppose that $H$ is a group and $\{h_\alpha\mid \alpha\in P\}$ is a fixed subset of  $H$.
For a $k$-tuple $f=(\alpha_1,\dots,\alpha_k)$ in $P$, $k$ a positive integer, we
set
$$f_H:=h_{\alpha_1}\cdots h_{\alpha_k}\in H.$$
\end{con}

Now let $\Pi$ be a reflectable set for $R$, and $\{z_l\}_{l\in L}$ be a fixed set of generators for $Z(\wt)$.
Then, using Proposition \ref{SM lem} together with the fact that $\{w_\alpha\;|\;\alpha\in\Pi\}$ generates $\wt$,
each $z_\ell$, $\ell\in L$, can be written in the form $$z_\ell=f_{\wt}$$ for some
$f=(\a_1,\ldots\a_k)\in\alt(\Pi)$, see Convention \ref{convention}.

The term $[x,y]$
denotes the commutator $xyx^{-1}y^{-1}$ of two elements $x,y$ of a group.

\begin{thm}\label{W W(V) CentP}
Let $\Pi$ be a reflectable set for $R$ and assume that $Z(\wt)$ is a free abelian group. Fix a free basis $\{z_l\}_{l\in L}$ for $Z(\wt)$,
and for each $\ell\in L$ let $f^\ell=(\a_1^\ell,\ldots,\a_{k_\ell}^\ell)$ be a fixed element of $\alt(\Pi)$ such that $z_\ell=f^\ell_{\wt}$.
Then the following statements are equivalent:

(i) The assignment $w_\a\mapsto x_\a$, $\a\in\Pi$, induces an isomorphism $\psi$ from the Weyl group $\w$ of $R$ onto the presented group $G$, defined by
\begin{itemize}
\item[] generators: $x_\alpha$, $\alpha\in\Pi$,
\item[] relations:
$x^2_\alpha$, $f^l_{G}$, $\alpha\in\Pi$, $l\in L$.
\end{itemize}

(ii) The assignment $w_\a\mapsto\xt_\a$, $\a\in\Pi$, induces an isomorphism $\theta$ from the hyperbolic Weyl group $\wt$ of $R$ onto the presented group $\G$, defined by
\begin{itemize}
\item[] generators: $\xt_\alpha$, $\alpha\in\Pi$,
\item[] relations:
$\xt_\alpha^2$, $[\xt_\alpha,f^l_{\G}]$, $\alpha\in\Pi$, $l\in L$.
\end{itemize}
\end{thm}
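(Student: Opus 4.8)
The plan is to reformulate both (i) and (ii) as statements about canonical comparison maps, link the two presentations by a morphism of central extensions, and close with the five lemma. First I would check well-definedness of the obvious maps. Under $x_\a\mapsto w_\a$ the relations of $G$ hold in $\w$: indeed $w_\a^2=1$, and $f^\ell_\w=\varphi(z_\ell)=1$ since $z_\ell\in Z(\wt)=\ker\varphi$ by Corollary \ref{centext}. Under $\xt_\a\mapsto w_\a$ the relations of $\G$ hold in $\wt$: again $w_\a^2=1$, while $[w_\a,f^\ell_\wt]=[w_\a,z_\ell]=1$ because $z_\ell$ is central. Hence there are canonical epimorphisms $\pi_G\colon G\to\w$ and $\pi_\G\colon\G\to\wt$ matching generators, and (i), (ii) are precisely the assertions that $\pi_G$, respectively $\pi_\G$, is an isomorphism (with $\psi$, resp. $\theta$, the inverse).

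Next I would realize the kernel of the extension inside $\G$. The relations $[\xt_\a,f^\ell_\G]=1$ say each $f^\ell_\G$ commutes with every generator, so $f^\ell_\G\in Z(\G)$ and the subgroup $N:=\la f^\ell_\G\mid\ell\in L\ra$ is central, in particular abelian. Adjoining the relations $f^\ell=1$ makes the commutator relations redundant, so a Tietze manipulation of presentations gives $\G/N\cong G$ via $\xt_\a N\mapsto x_\a$; equivalently $\rho\colon\G\to G$, $\xt_\a\mapsto x_\a$, is a well-defined epimorphism with $\ker\rho=N$. Since $\pi_\G(f^\ell_\G)=f^\ell_\wt=z_\ell$ and $\{z_\ell\}$ generates $Z(\wt)$, the restriction $\pi_N:=\pi_\G|_N$ maps $N$ onto $Z(\wt)$. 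This yields a commutative ladder of central extensions
\[
\begin{array}{ccccccccc}
1 & \longrightarrow & N & \longrightarrow & \G & \stackrel{\rho}{\longrightarrow} & G & \longrightarrow & 1\\
  &  & \downarrow{\scriptstyle\,\pi_N} &  & \downarrow{\scriptstyle\,\pi_\G} &  & \downarrow{\scriptstyle\,\pi_G} &  & \\
1 & \longrightarrow & Z(\wt) & \longrightarrow & \wt & \stackrel{\varphi}{\longrightarrow} & \w & \longrightarrow & 1,
\end{array}
\]
where commutativity of the right square is verified on generators ($\xt_\a\mapsto w_\a\mapsto w_\a$ versus $\xt_\a\mapsto x_\a\mapsto w_\a$) and of the left square holds since $\pi_N=\pi_\G|_N$.

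The crucial step, and the only place the hypothesis is used, is to show $\pi_N\colon N\to Z(\wt)$ is an isomorphism; this is exactly where freeness of $Z(\wt)$ enters. As $N$ is abelian and generated by $\{f^\ell_\G\}_{\ell\in L}$, there is a surjection $q\colon\bbbz^{(L)}\to N$ sending the $\ell$-th standard generator to $f^\ell_\G$, and $\pi_N\circ q\colon\bbbz^{(L)}\to Z(\wt)$, $e_\ell\mapsto z_\ell$, is an isomorphism because $\{z_\ell\}$ is a free basis. Injectivity of the composite forces $q$ to be injective, hence an isomorphism, so $\pi_N=(\pi_N q)q^{-1}$ is an isomorphism too. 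I expect this to be the one genuinely delicate point: without a free (or at least torsion-free, finitely controlled) center one cannot promote the surjection $\pi_N$ to an injection.

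Finally I would invoke the five lemma on the ladder, now knowing $\pi_N$ is an isomorphism. For (i)$\Rightarrow$(ii), the short five lemma gives that $\pi_N$ and $\pi_G$ being isomorphisms forces $\pi_\G$ to be an isomorphism. For (ii)$\Rightarrow$(i), I would pass to quotients instead: since $\pi_\G$ is an isomorphism carrying $N$ onto $Z(\wt)$ (as $\pi_N$ is onto), it descends to an isomorphism $\G/N\to\wt/Z(\wt)$, which under the identifications $\G/N\cong G$ and $\wt/Z(\wt)\cong\w$ is exactly $\pi_G$. Everything outside the isomorphism $\pi_N\cong Z(\wt)$ is then routine bookkeeping with presentations and a standard diagram chase.
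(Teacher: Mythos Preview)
Your argument is correct and follows the same architecture as the paper: both build the identical commutative ladder of central extensions with $N=\langle f^\ell_{\G}\rangle$ sitting over $Z(\wt)$, and both isolate the one substantive step as showing that $\pi_N\colon N\to Z(\wt)$ is an isomorphism, using that $\{z_\ell\}$ is a \emph{free} basis to force injectivity of the surjection $\bbbz^{(L)}\twoheadrightarrow N$.

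Where you differ is in bookkeeping. For (i)$\Rightarrow$(ii) the paper constructs an explicit inverse to the induced map $\G/N\to\w$ (using the assumed isomorphism $\psi$), whereas you invoke the short five lemma directly; these are interchangeable. For (ii)$\Rightarrow$(i) there is a more noticeable difference: the paper takes a detour through the alternating presentation $G'$ of Theorem~\ref{W(V) AP}, showing that every alternating relation in $G'$ becomes trivial in $G$ by writing the corresponding central element as a word in the $z_\ell$ and pushing it through $\rho\circ\theta$. Your quotient argument bypasses this entirely: once $\pi_{\G}$ and $\pi_N$ are isomorphisms, $\pi_{\G}^{-1}(Z(\wt))=N$ and the induced map on quotients is exactly $\pi_G$. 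This is cleaner and makes no appeal to Theorem~\ref{W(V) AP}; the paper's route, on the other hand, makes the mechanism more explicit by exhibiting precisely how an arbitrary relation in $\w$ reduces to a product of the $f^\ell_G$.
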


\begin{proof}
(i)$\Longrightarrow$(ii). Consider the assignment
\begin{eqnarray*}
\theta:\{\xt_\alpha\}_{\alpha\in\Pi}\longrightarrow\{w_\alpha\}_{\alpha\in\Pi}\\
\xt_{\a}\quad\longmapsto\;w_\a.\quad\quad\;
\end{eqnarray*}
Since any defining relation in $\widetilde{G}$ corresponds to a relation in $\wt$, the map $\theta$ can be extended to an epimorphism from $\widetilde{G}$ onto $\wt$. We proceed with the proof by showing that $\theta$ is injective.

Consider the subgroup $\widetilde{Z}:=\langle f^l_{\G}\;|\;l\in L\rangle$ of $\widetilde{G}$.  We show that $\widetilde{Z}\cong Z(\wt)$ and $\widetilde{G}/\widetilde{Z}\cong\w$. From the defining relations of $\G$, it is clear that $\widetilde{Z}$ is contained in $Z(\widetilde{G})$. Since $$\theta(f^l_{\G})=f^l_{\wt}=z_l$$
and $\{z_l\;|\;l\in L\}$ is basis of $Z(\wt)$, it follows easily that $$\theta_1:=\theta_{|_{\widetilde{Z}}}:\widetilde{Z}\longrightarrow Z(\wt)$$
is an isomorphism.

Next, from Section \ref{ARS Def}, recall that $\varphi:\wt\longrightarrow\w$ is the epimorphism which is defined by $\varphi(w)=w_{|_\v}$, with $\mathrm{Ker}\varphi=Z(\wt)$, and consider the epimorphism $\varphi\circ\theta:\widetilde{G}\longrightarrow\w$. We have
$\widetilde{Z}\subseteq\mathrm{Ker}(\varphi\circ\theta)$. So, there is an epimorphism $\theta_2:\widetilde{G}/\widetilde{Z}\longrightarrow\w$ such that $\theta_2(\widetilde{x}\widetilde{Z})=\varphi\circ\theta(\widetilde{x})$, for $\xt\in\widetilde{G}$. In particular, for $\alpha\in\Pi$, we have $\theta_2(\xt_\alpha\widetilde{Z})=w_\alpha$.

Let $\pi:\widetilde{G}\longrightarrow\widetilde{G}/\widetilde{Z}$ be the canonical map and $\imath:\widetilde{Z}\longrightarrow\widetilde{G}$ be the inclusion. Then we have the following commutative diagram of exact sequences:
\begin{equation*}
\begin{array}{lllllllll}
1&\longrightarrow& Z(\widetilde{\w})&\stackrel{i}{\hookrightarrow}&\widetilde{\w}&\stackrel{\varphi}{\longrightarrow}&\w&\longrightarrow&1\\
&&\uparrow\theta_1&&\uparrow\theta&&\uparrow\theta_2&&\\
1&\longrightarrow& \widetilde{Z}&\stackrel{\imath}{\hookrightarrow}&\widetilde{G}&\stackrel{\pi}{\longrightarrow}&\widetilde{G}/\widetilde{Z}&\longrightarrow&1.
\end{array}
\end{equation*}
Since $\theta_1$ is injective, if we show that $\theta_2$ is injective, then we have $\theta$ is injective. Thus $\wt\cong\widetilde{G}$.

To show that $\theta_2$ is injective we show that $\theta_2$ is invertable. For $\alpha\in\Pi$, we have $(\xt_\alpha\widetilde{Z})^2=\widetilde{Z}$ and, for $l\in L$, we have $f^l_{\widetilde{G}}\widetilde{Z}=\widetilde{Z}$. Thus any defining relation in $G$ corresponds to a relation in $\widetilde{G}/\widetilde{Z}$. So, there is an epimorphism $\kappa:G\longrightarrow \widetilde{G}/\widetilde{Z}$, where $\kappa(x_\alpha)=\xt_\alpha\widetilde{Z}$, for $\alpha\in\Pi$. Ultimately, we have an epimorphism $\kappa\circ\psi:\w\longrightarrow\widetilde{G}/\widetilde{Z}$ such that $\kappa\circ\psi(w_\alpha)=\xt_\alpha\widetilde{Z}$. Clearly $\kappa\circ\psi$ is the inverse of $\theta_2$.

(ii)$\Longrightarrow$(i). From Theorem \ref{W(V) AP}, we know that the assignment $w_\a\mapsto y_\a$, $\a\in\Pi$, induces an isomorphism from $\w$ onto the
group $G'$ defined by
\begin{itemize}
\item[] generators: $y_\alpha$, $\alpha\in\Pi$,
\item[] relations:
$y_{\alpha_1}\cdots y_{\alpha_k}$, $(\a_1,\ldots,\a_k)\in
\alt(\Pi)$.
\end{itemize}
It is clear that every relation in $\G$ is a relation in $G$. Thus we can extend the natural one-to-one correspondence  $\rho:\{\xt_\a\}_{\a\in\Pi}\longrightarrow\{x_\a\}_{\a\in\Pi}$ to an epimorphism $\rho:\G\longrightarrow G$, where $\rho(\xt_\a)=x_\a$. Thus $\rho\circ\theta$ is an epimorphism from $\wt$ onto $G$.
We show
that $G\cong G'$.

Let
$\phi:\{x_\alpha\}_{\alpha\in\Pi}\longrightarrow\{y_\alpha\}_{\alpha\in\Pi}$
be the natural one-to-one correspondence between the set of
generators of $G$ and the set of generators of $G'$. Since, for
$\alpha\in\Pi$, we have $(\alpha,\alpha)\in\alt(\Pi)$ and $f^l\in
\alt(\Pi)$, for $l\in L$, each relation in $G$ corresponds naturally to a defining relation in $G'$.
Thus $\phi$ can be extended to a group epimorphism
$\psi:G\longrightarrow G'$, where $\psi(x_\alpha)=y_\alpha$, for
$\alpha\in\Pi$.

Now, let $y_{\alpha_1}\cdots y_{\alpha_k}$ be a relation in $G'$.
Thus $(\alpha_1,\dots,\alpha_k)\in\alt(\Pi)$. By Proposition \ref{SM
lem}, we have $w:=w_{\alpha_1}\cdots w_{\alpha_k}\in Z(\wt)$. So
there are $l_1,\dots,\l_t\in L$ and $n_1,\dots,n_t\in\mathbb{Z}$ such that
$$w=(z_{l_1})^{n_1}\cdots (z_{l_t})^{n_t}=(f^{l_1}_{\wt})^{n_1}\cdots (f^{l_t}_{\wt})^{n_t}.$$
Thus
\begin{eqnarray*}
x_{\alpha_1}\cdots x_{\alpha_k}&=&\rho\circ\theta(w)\\
&=&\rho\circ\theta((f^{l_1}_{\wt})^{n_1}\cdots (f^{l_t}_{\wt})^{n_t})\\
&=&(f^{l_1}_{G})^{n_1}\cdots (f^{l_t}_{G})^{n_t}\\
&=&1.
\end{eqnarray*}
So, any defining relation in $G'$ corresponds to a
relation in $G$. Thus $\phi^{-1}$ can be extended to a
group epimorphism $\eta:G'\longrightarrow G$. Since $\psi$ is an extension of $\phi$ and
$\eta$ is an extensions of $\phi^{-1}$, they are inverse of each other.
Thus $G\cong G'\cong\w$.
\end{proof}

\begin{rem}
(i) The proof of (i)$\Longrightarrow$(ii) of Theorem \ref{W W(V) CentP} is in fact a consequence of a more general result concerning presented groups, see \cite[Theorem 10.2]{JPOG}. However, to apply this general fact to our situation, it needs quite a bit of adjustments. For this reason, we preferred to give a direct proof for this special case.

(ii) According to \cite[Lemma 3.18 (i) and Corollary 3.29]{AEAWG}, if $R$ is an extended affine root system in the sense of \cite[Definition II.2.1]{AABGP}, then $Z(\wt)$ is a free abelian group of finite rank. So Theorem \ref{W W(V) CentP} is applicable to extended affine Weyl groups of type $A_1$. In the next section we will show that when $R$ is an extended affine root system, $\w$ is isomorphic to the group $G$ defined in Theorem \ref{W W(V) CentP}, for a special reflectable set $\Pi$ and a particular set of $f^l$'s.
\end{rem}

\section{\bf Application to extended affine Weyl groups}\setcounter{equation}{0}\label{RedPre}
Let $(A,\fm, R)$ be an affine reflection system.
In this section, we assume that $A^0$ is a free abelian group of rank $\nu$.
Then $1\otimes R$ as a subset of $\v:=\mathbb{R}\otimes_\mathbb{Q}A$ turns out to be an \textit{extended
affine root system} of type $A_1$ in the sense of \cite[Definition II.2.1]{AABGP}. So the hyperbolic Weyl group $\wt$ is just an extended affine Weyl group in the sense \cite[Definition 2.14]{AEAWG}. Now, similar to what we have seen in Section \ref{hweyl}, $R$ can be identified with $1\otimes R$ and
we have $\v=\vd\oplus\v^0$, where
$\vd=\spani_{\mathbb R}\ad$ and $\v^0=\spani_{\mathbb R}A^0$. Set
$$\Lam:=A^0.$$
Then
$$R=(S+S)\cup(\rd+S),$$
where, in this case, the pointed reflection space $S$ is a \textit{semilattice} in $\Lam$ in the sense of
\cite[Definition II.1.2]{AABGP}, namely $S$ is a subset of $\Lam$ satisfying
$$0\in S,\quad S\pm 2S\sub S,\quad \la S\ra=\Lam.$$

By \cite[Remark II.1.6]{AABGP}, we have $$S=\bigcup_{i=0}^m(\tau_i+2\Lambda),$$
where $\tau_i$'s represent distinct cosets of $2\Lambda$ in $\Lam$, for $1\leq i\leq m$, and $\tau_0=0$.  By \cite[Proposition II.1.11]{AABGP}, $\Lambda$ has a $\mathbb{Z}$-basis consisting of elements of $S$. So we may assume that
 $$\{\sg_1:=\tau_1,\ldots,\sg_\nu:=\tau_\nu\}$$
 is a $\mathbb{Z}$-basis of $\Lambda$. It follows from \cite[Theorem 3.1]{AYY} that
\begin{equation}\label{frb}
\Pi:=\{\ep,\ep+\tau_1,\ldots,\ep+\tau_m\}
\end{equation}
is a reflectable base for $R$. Considering these facts, there are two extreme
cases for $S$ which we would like to treat separately. The first case is when
$m=\nu$.
We call the corresponding root system the {\it baby} extended affine root system and we denote it by $R_b$. Another extreme case
is when $S$ is a lattice, namely $S=\Lam$. The corresponding root system is called the {\it toroidal}  root system which we denote it by $R_t$.  With our conventions, for any extended affine root system $R$ of type $A_1$ in $A$, we have $R_b\subseteq R\subseteq R_t$. This justifies special treatment of $R_b$ and $R_t$.

The following proposition is essential for obtaining a new presentation for $\w$.

\begin{pro}\label{baby W}
We have
\begin{itemize}
\item[(i)] $w_{\alpha+\sigma+\delta}=w_{\alpha+\sigma}w_\alpha
w_{\alpha+\delta}$, for $\alpha\in R_t^\times$ and $\sigma,\delta\in
\Lam$.
\item[(ii)]
$w_{\alpha+k\sigma}w_\alpha=(w_{\alpha+\sigma}w_\alpha)^k$, $k\in\mathbb{Z}$, $\alpha\in R_t^\times$ and $\sigma\in\Lam$.
\item[(iii)]
We have $\w=\w_b$, for any extended affine root system $R$ in $A$.
\end{itemize}
\end{pro}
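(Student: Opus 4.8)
The plan is to prove the three parts in the order stated, using earlier parts to bootstrap the later ones, since (ii) and (iii) both rely on (i). For part (i), the cleanest strategy is to verify the identity $w_{\alpha+\sigma+\delta}=w_{\alpha+\sigma}w_\alpha w_{\alpha+\delta}$ directly from Proposition \ref{cor1}. Since $\alpha\in R_t^\times$ we may normalize so that $\sgn(\alpha)=1$, and then $\sgn(\alpha+\sigma)=\sgn(\alpha+\delta)=\sgn(\alpha+\sigma+\delta)=1$ because $\sigma,\delta\in\Lambda=A^0$ lie in the radical. The claimed identity is equivalent to $w_{\alpha+\sigma+\delta}\,w_{\alpha+\delta}\,w_\alpha\,w_{\alpha+\sigma}=1$, i.e. to showing that the $4$-tuple $(\alpha+\sigma+\delta,\alpha+\delta,\alpha,\alpha+\sigma)$ lies in $\alt(R_t^\times)$. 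This is a short check: the length $4$ is even, and the alternating sum
$$-p(\alpha+\sigma+\delta)+p(\alpha+\delta)-p(\alpha)+p(\alpha+\sigma)$$
collapses — using $p(\alpha+\mu)=p(\alpha)+\mu$ for $\mu\in\Lambda$ — to $-(\,p(\alpha)+\sigma+\delta\,)+(\,p(\alpha)+\delta\,)-p(\alpha)+(\,p(\alpha)+\sigma\,)=0$. By Proposition \ref{cor1} the product is trivial, proving (i).

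For part (ii), I would induct on $k$ using (i). The base cases $k=0,1$ are immediate. For the inductive step, apply (i) with $\sigma$ replaced by $k\sigma$ and $\delta$ replaced by $\sigma$ to get $w_{\alpha+(k+1)\sigma}=w_{\alpha+k\sigma}\,w_\alpha\,w_{\alpha+\sigma}$, and hence $w_{\alpha+(k+1)\sigma}w_\alpha=w_{\alpha+k\sigma}\,w_\alpha\,w_{\alpha+\sigma}\,w_\alpha=(w_{\alpha+k\sigma}w_\alpha)(w_{\alpha+\sigma}w_\alpha)$; the induction hypothesis then gives $(w_{\alpha+\sigma}w_\alpha)^{k+1}$. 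Negative $k$ follows by taking inverses, noting $w_\alpha^2=1$ so that $(w_{\alpha+\sigma}w_\alpha)^{-1}=w_\alpha w_{\alpha+\sigma}=w_{\alpha}w_{\alpha+\sigma}$, which one reconciles with the $k=-1$ instance of (i). This is routine.

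Part (iii) is where the real content lies and will be the main obstacle. The goal is $\w=\w_b$, where $\w_b=\w_{R_b}$ is generated by the reflections attached to the baby reflectable base $\Pi_b=\{\epsilon,\epsilon+\sigma_1,\dots,\epsilon+\sigma_\nu\}$. Since $\Pi=\{\epsilon,\epsilon+\tau_1,\dots,\epsilon+\tau_m\}$ from (\ref{frb}) is a reflectable base for $R$, it suffices to show each generator $w_{\epsilon+\tau_i}$ already lies in $\w_b$. The strategy is to write each $\tau_i$ in terms of the $\mathbb{Z}$-basis $\{\sigma_1,\dots,\sigma_\nu\}$ of $\Lambda$ and then use parts (i) and (ii) to decompose $w_{\epsilon+\tau_i}$ into products of the $w_{\epsilon+\sigma_j}$ and $w_\epsilon$. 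Concretely, writing $\tau_i=\sum_j k_j\sigma_j$, repeated application of (i) (to peel off one basis vector at a time) together with (ii) (to handle the integer multiples $k_j\sigma_j$) expresses $w_{\epsilon+\tau_i}$ as a word in $w_\epsilon$ and the $w_{\epsilon+\sigma_j}$, all of which are generators of $\w_b$. The delicate point is bookkeeping: one must apply (i) with $\alpha$ being a \emph{shifted} root $\epsilon+(\text{partial sum})$ rather than $\epsilon$ itself, and confirm that every intermediate element $\epsilon+\mu$ with $\mu\in\Lambda$ is a genuine non-isotropic root so that its reflection is defined and lies in $\w$ — this holds because $\epsilon+\Lambda\subseteq R_t^\times$ and the identities of (i), (ii) are proved over $R_t$. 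Once every $w_{\epsilon+\tau_i}\in\w_b$, the reverse containment $\w_b\subseteq\w$ being obvious from $R_b\subseteq R$, we conclude $\w=\w_{\Pi}\subseteq\w_b\subseteq\w$, giving equality.
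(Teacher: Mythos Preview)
Your proof is correct and follows essentially the same approach as the paper: each part is reduced to Proposition~\ref{cor1} via an explicit alternating tuple, and (iii) is obtained by decomposing $w_{\epsilon+\sigma}$ into a word in $w_\epsilon$ and the $w_{\epsilon+\sigma_j}$ by means of (i) and (ii). The only minor differences are that the paper proves (ii) by exhibiting the alternating $(2|k|+2)$-tuple directly rather than by induction, and in (iii) it always applies (i) with $\alpha=\epsilon$ (peeling off one summand $k_j\sigma_j$ at a time), so your worry about ``shifted'' base roots never actually arises.
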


\begin{proof}
(i)-(ii) The tuples
\begin{equation*}
(\alpha+\sigma+\delta,\alpha+\sigma,\alpha,\alpha+\delta)\quad\quad\quad
\end{equation*}
\begin{equation*}
(\alpha,\alpha+k\sigma,\overbrace{\alpha+\sigma,\alpha,\dots,\alpha+\sigma,\alpha}^{2k}),\quad
for\;k>0
\end{equation*}
\begin{equation*}
(\alpha,\alpha+k\sigma,\overbrace{\alpha,\alpha+\sigma,\dots,\alpha,\alpha+\sigma}^{-2k}),\quad
for\;k<0\\
\end{equation*}
are elements of $\alt(R_t^\times)$. Also (ii) is nothing but $w_\alpha^2=1$, for $k=0$. Thus by Proposition \ref{cor1} and Definition \ref{semi-relation} (i) and (ii) hold.

(iii) It is enough to show that $w_\alpha\in\w_b$, for each $\alpha\in\pm\epsilon+\Lambda$. Since $w_\alpha=w_{-\alpha}$, we assume that $sgn(\alpha)=1$.
Let $\alpha=\epsilon+\sigma$ and $$\sigma=\sum_{i=1}^\nu
k_i\sigma_i,$$ where $k_i\in\mathbb{Z}$. Using (i), we have
\begin{equation*}
w_\alpha=w_{\epsilon+k_1\sigma_1}w_\epsilon\cdots
w_{\epsilon+k_{\nu-1}\sigma_{\nu-1}}w_\epsilon
w_{\epsilon+k_\nu\sigma_\nu}.
\end{equation*}
Now for each $i$, from (ii) we have $$w_{\ep+k_i\sg_i}w_\ep=(w_{\ep+\sg_i}w_\ep)^{k_i}.$$ This way we obtain an expression of $w_\alpha$ with respect to the reflections based on elements of $\Pi_0=\{\epsilon,\epsilon+\sigma_1,\dots,\epsilon+\sigma_\nu\}$. Since $\Pi_0$ is a subset of $R_b$, we have $w_\alpha\in\w_b$.
\end{proof}

Here we offer a finite presentation for $\w$ which is essential for the rest of this section. Let $\Pi_0:=\{\a_0:=\epsilon,\a_1:=\epsilon+\sigma_1,\dots,\a_\nu:=\epsilon+\sigma_\nu\}$. First we analyze the elements of $\alt(\Pi_0)$ which we need for our presentation.

\begin{rem}\label{eaalt} Let $f=(\a_{j_1},\dots,\a_{j_k})$ be an element of
$\alt(\Pi_0)$. The followings can be easily checked from
definition of $\alt(\Pi_0)$.

(i) From Corollary \ref{action cor}, for any $1\leq s\leq k$, we have
\begin{equation*}
(\a_{j_1},\dots,\a_{j_{s-1}},\a_{j_{s+2}},\a_{j_{s+1}},\a_{j_s},\a_{j_{s+3}},\dots,\a_{j_k})\in\alt(\Pi_0).
\end{equation*}

(ii) Note that $\Pi_0$ is a linearly independent set. So, if $n_i$ is the number of $\a_i$ in $f$, for $0\leq i\leq\nu$, namely $$n_i=|\{s\;|\;1\leq s\leq k,\;j_s=i\}|,$$
then $n_i$ is even. Also, for each $1\leq r\leq k$ there is an odd  integer $p$ such that $j_r=j_{r+p}$.

(iii) If $k=2$ then we have $f=(\a_i,\a_i)$, for some $0\leq
i\leq\nu$. If $k=4$ then either $f=(\a_i,\a_i,\a_j,\a_j)$ or
$f=(\a_i,\a_j,\a_j,\a_i)$, for some $0\leq i,j\leq\nu$. The only
possible alternating 6-tuple $f=(\a_{j_1},\dots,\a_{j_6})$ which
does not contain an alternating 4-tuple of the form
$(\a_{j_s},\dots,\a_{j_{s+3}})$ has to be of the  form
$f=(\a_i,\a_j,\a_m,\a_i,\a_j,\a_m)$, for some $0\leq
i,j,m\leq\nu$.
\end{rem}

\begin{thm}\label{Baby W pre}
Let $R$ be an extended affine root system of type $A_1$ and nullity $\nu$, then the Weyl group $\w$ of $R$ is isomorphic to the group $G$, defined by
\begin{itemize}
\item[] generators: $x_i$, $0\leq i\leq\nu$,
\item[] relations:
$x^2_k$, $(x_0x_ix_j)^2$, $0\leq k\leq\nu$, $1\leq i<j\leq\nu$.
\end{itemize}
\end{thm}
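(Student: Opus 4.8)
The plan is to reduce the general statement to the baby system and then feed the intrinsic presentation of Theorem \ref{W(V) AP} into it. First I would observe that $x_i\mapsto w_{\alpha_i}$ extends to a homomorphism $\phi\colon G\to\w$: the relator $x_k^2$ maps to $w_{\alpha_k}^2=1$, and $(x_0x_ix_j)^2$ maps to the square of a product of three reflections, which is trivial by Proposition \ref{cor1}. Since $\w=\w_b=\w_{\Pi_0}$ by Proposition \ref{baby W}(iii), the map $\phi$ is surjective for every extended affine $R$ of nullity $\nu$, and the whole difficulty is injectivity. For this I use that $\Pi_0$ is exactly the reflectable base \eqref{frb} of the baby system $R_b$ (the case $m=\nu$), so Theorem \ref{W(V) AP} gives an isomorphism $\Phi$ from $\w=\w_b$ onto the group $G'$ presented by generators $x_{\alpha_i}$ together with all relators $x_{\alpha_{j_1}}\cdots x_{\alpha_{j_k}}$ for $(\alpha_{j_1},\dots,\alpha_{j_k})\in\alt(\Pi_0)$. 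Each defining relator of $G$ lies in $\alt(\Pi_0)$: the tuple $(\alpha_k,\alpha_k)$ gives $x_k^2$, and $(\alpha_0,\alpha_i,\alpha_j,\alpha_0,\alpha_i,\alpha_j)$ gives $(x_0x_ix_j)^2$, both satisfying $\sum_r(-1)^r\sgn(\alpha_{j_r})p(\alpha_{j_r})=0$ since every $\sgn(\alpha_{j_r})=1$. Hence there is a natural epimorphism $\pi\colon G\to G'$ with $\Phi\circ\pi=\phi$, and it suffices to prove that $\pi$ is an isomorphism, i.e. that every $\alt(\Pi_0)$-relator is already trivial in $G$.

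The heart of the argument is the claim that $(x_ax_bx_c)^2=1$ in $G$ for all $a,b,c\in\{0,\dots,\nu\}$, equivalently $x_ax_bx_c=x_cx_bx_a$; this is the algebraic shadow of Corollary \ref{action cor} and is precisely what makes the braid move of Remark \ref{eaalt}(i) available inside $G$. The cases with two coinciding indices follow from $x_k^2=1$. When $\{a,b,c\}=\{0,i,j\}$ with $i\neq j$ nonzero, the relation $(x_0x_ix_j)^2=1$ is stable under cyclic rotation (conjugation) and under reversal (taking inverses, using the involutions), so all six orderings of $\{0,i,j\}$ are involutions. The genuinely new case is $(x_ix_jx_m)^2=1$ for distinct nonzero $i,j,m$ (which only arises for $\nu\ge3$); here the device is to insert $x_0^2=1$ and route through the already-established triples containing $0$, via
\begin{align*}
x_ix_jx_m&=(x_ix_jx_0)(x_0x_m)=(x_0x_jx_i)(x_0x_m)=x_0x_j(x_ix_0x_m)\\
&=x_0x_j(x_mx_0x_i)=(x_0x_jx_m)x_0x_i=(x_mx_jx_0)x_0x_i=x_mx_jx_i,
\end{align*}
each step using one of the $\{0,p,q\}$-relations. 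I expect this insertion trick to be the main obstacle, as it is the only point where the restricted family $(x_0x_ix_j)^2$ (rather than an unrestricted triple relation) must be exploited to recover the symmetry of an arbitrary triple.

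Finally I would show $\pi$ is injective by induction on the length $k$ of an $\alt(\Pi_0)$-tuple. For $k>0$, Remark \ref{eaalt}(ii) guarantees that the first letter $x_{j_1}$ recurs at some odd distance $p$; since the braid move is now a valid identity in $G$ and, by Remark \ref{eaalt}(i), preserves membership in $\alt(\Pi_0)$, and since such moves realise the position transpositions $(s,s+2)$, which permute odd and even positions separately, I can transport the matching letter (at the even position $1+p$) to position $2$ while fixing position $1$, producing an equal element of $G$ whose tuple is still in $\alt(\Pi_0)$ and has an adjacent equal pair. Cancelling this pair by $x_{j_1}^2=1$ leaves a strictly shorter tuple still in $\alt(\Pi_0)$, because deleting an adjacent equal pair does not change the defining alternating sum. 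The induction then collapses the entire word to the identity in $G$, so every $\alt(\Pi_0)$-relator is trivial in $G$; hence $\pi$, and therefore $\phi=\Phi\circ\pi$, is an isomorphism and $\w\cong G$. As a consistency check, this matches Remark \ref{eaalt}(iii): the only short alternating tuple not already reducible through a consecutive $4$-subtuple is the $6$-tuple giving $(x_ix_jx_m)^2$, which is exactly the relation dispatched by the insertion trick above.
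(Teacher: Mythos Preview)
Your proposal is correct and follows essentially the same route as the paper: reduce to $\w_b$ via Proposition~\ref{baby W}(iii), compare $G$ with the $\alt(\Pi_0)$-presentation $G'$ of Theorem~\ref{W(V) AP}, establish $(x_ax_bx_c)^2=1$ for all triples by inserting $x_0^2$ and routing through triples containing $0$, and then run an induction on length using the braid move $x_rx_sx_t=x_tx_sx_r$ together with Remark~\ref{eaalt}. Your chain for $x_ix_jx_m=x_mx_jx_i$ is in fact a bit cleaner than the paper's computation, and your induction moves the matching letter to position~$2$ while the paper moves $x_{j_1}$ forward to position $s-1$, but these are cosmetic variants of the same argument.
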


\begin{proof}
By Proposition \ref{baby W}, we have $\w=\w_b$. Since $\Pi_0$ is a reflectable base for $R_b$, using Theorem \ref{W(V) AP}, $\w_b$ is isomorphic to the group $G'$, defined by
\begin{itemize}
\item[] generators: $y_\alpha$, $\alpha\in\Pi_0$,
\item[] relations: $y_{\alpha_1}\cdots y_{\alpha_k}$, $(\a_1,\ldots,\a_k)\in\alt(\Pi_0)$.
\end{itemize}
Let $\phi:\{x_i\}_{i=0}^\nu\longrightarrow\{y_{\alpha_i}\}_{i=0}^\nu$
be the natural one-to-one correspondence between the set of
generators of $G$ and the set of generators of $G'$. Since, for
$\alpha\in\Pi_0$, we have $(\alpha,\alpha)\in\alt(\Pi_0)$ and $(\a_0,\a_i,\a_j,\a_0,\a_i,\a_j)\in\alt(\Pi_0)$, for $0\leq i<j\leq\nu$, each relation in $G$ corresponds naturally to a defining relation in $G'$.
Thus $\phi$ can be extended to a group epimorphism
$\psi:G\longrightarrow G'$, where $\psi(x_i)=y_{\alpha_i}$, for
$0\leq i\leq \nu$. We show that $\psi$ is an isomorphism.

Let $y_{\a{j_1}}\cdots y_{\a_{j_k}}$ be a defining relation in $G'$. Then $(\a_{j_1},\cdots,\a_{j_k})\in\alt(\Pi_0)$. We prove that $x:=x_{j_1}\cdots x_{j_k}$ is a relation in $G$. Notice that $k$ is even. We show this by induction on $m$, where $k=2m$.

From Remark \ref{eaalt} (iii), for $k=2,4$, we have $x$ is one of the expressions
$x_i^2$, $x_i^2x_j^2$ or $x_ix_j^2x_i$ which are clearly relations in $G$.

Next we examine the case $k=6$. It follows easily from defining relations of $G$ that
\begin{equation}\label{zp2}
\begin{array}{c}
(x_rx_sx_t)^2=1\quad\mathrm{and}\quad x_rx_sx_t=x_tx_sx_r,\vspace{2mm}\\
\hbox{if at least one of $r,s$ or $t$ is 0}.
\end{array}
\end{equation}
If
$(\a_{j_1},\dots,\a_{j_6})$ contains an alternating 4-tuple
$f'=(\a_{j_s},\ldots,\a_{j_{s+3}})$ and $y$ is the element in
$G$ corresponding to $f'$, then $x$ has to have one of the
forms $x_j^2y$, $x_jyx_j$ or $yx_j^2$ and so by the case $k=4$,
$x$ is a relation in $G$. Thus by Remark \ref{eaalt}, we may
assume that the alternating 6-tuple under consideration is
$(\a_r,\a_s,\a_t,\a_r,\a_s,\a_t)$, where none of $r,s$ and $t$
is 0. By (\ref{zp2}), we have
\begin{eqnarray*}
x&=&(x_rx_sx_t)^2\\
&=&x_0^2(x_rx_sx_t)^2\\
&=&x_0x_sx_rx_0x_tx_rx_sx_t\\
&=&x_0x_sx_tx_tx_rx_0x_tx_rx_0x_0x_sx_t\\
&=&x_0x_sx_t(x_tx_rx_0)^2x_0x_sx_t\\
&=&x_0x_sx_tx_0x_sx_t=1.
\end{eqnarray*}
Thus $x$ is a relation in $G$ when $k=6$, and
\begin{equation}\label{p2}
x_rx_sx_t=x_tx_rx_s,\quad\hbox{ for all }0\leq r,s,t\leq\nu.
\end{equation}

Now, let $m>3$ (or $k>6$) and assume every expression in $G$ corresponding to an alternating
$2n$-tuple is a relation in $G$, for $n<m$. First
assume that for some $1\leq r\leq k-1$, $j_r=j_{r+1}$.  Since
$x_{j_r}^2=1$, we have
$$x=x_{j_1}\cdots x_{j_k}=x_{j_1}\cdots x_{j_{r-1}} x_{j_{r+2}} \cdots x_{j_k}$$
and $(\a_{j_1},\dots,\a_{j_{r-1}},\a_{j_{r+2}},\dots,\a_{j_k})$
is an alternating $(k-2)$-tuple. Thus $x$ is a relation in
$G$, by induction steps. So, we may assume that
$j_r\not=j_{r+1}$, for all $1\leq r\leq k-1$. From Remark
\ref{eaalt} (ii), the number of $\a_{j_1}$ appearing in
$(\a_{j_1},\dots,\a_{j_k})$ is even and there is an even
integer $2\leq s\leq k$ such that $j_1=j_s$. Since by
(\ref{p2}) $x_{j_1}x_{j_2}x_{j_3}=x_{j_3}x_{j_2}x_{j_1}$, we
have
$$x=x_{j_1}\cdots x_{j_k}=x_{j_3}x_{j_2}x_{j_1}x_{j_4}\cdots x_{j_k}.$$
By repeating this process we can move $x_{j_1}$ next to $x_{j_s}$, namely
$$x=x_{j_1}\cdots x_{j_k}=x_{j_3}x_{j_2}x_{j_5}x_{j_4}\cdots x_{j_{s-1}}x_{j_{s-2}}x_{j_1}x_{j_s}\cdots x_{j_k}.$$
By Remark \ref{eaalt} (i), $(\a_{j_3},\a_{j_2},\a_{j_5},\a_{j_4}\dots \a_{j_{s-1}},\a_{j_{s-2}},\a_{j_1},\a_{j_s},\dots,\a_{j_k})$ is an alternating  $k$-tuple. Thus $(\a_{j_3},\a_{j_2},\a_{j_5},\a_{j_4}\dots \a_{j_{s-1}},\a_{j_{s-2}},\a_{j_{s+1}},\dots,\a_{j_k})$ is an alternating  $(k-2)$-tuple and
$$x=x_{j_1}\cdots x_{j_k}=x_{j_3}x_{j_2}x_{j_5}x_{j_4}\cdots x_{j_{s-1}}x_{j_{s-2}}x_{j_{s+1}}\cdots x_{j_k}.$$
By induction steps, the right hand side is a relation in $G$. Thus every relation in $G'$ corresponds to a relation in $G$. So, $\phi^{-1}$ can be extended to a group epimorphism $\eta:G'\longrightarrow G$, satisfying $\psi\circ\eta=1$ and $\eta\circ\psi=1$. Thus $G\cong G'\cong\w$.
\end{proof}

A slight modification of generators and relations in the statement of Theorem \ref{Baby W pre} provides a new presentation for
$\w$ which is more useful for our purposes. We do this in the following proposition. Even though the proof is elementary and straightforward,
we provide the details for the convenience of the reader.

\begin{pro}\label{Baby W spre}
Let $B$ be a subset of $\{(i,j)\;|0\leq i<j\leq\nu\}$. Then $\w$ is isomorphic to the group $G$, defined by
\begin{itemize}
\item[] generators: $x_k$, $x_{(i,j)}$, $0\leq k\leq\nu$, $(i,j)\in B$,
\item[] relations:
$x^2_k$, $x_{(i,j)}^2$, $0\leq k\leq\nu$, $(i,j)\in B$;  $x_{(i,j)}x_ix_0x_j$, $(i,j)\in B$; $(x_ix_0x_j)^2$, $(i,j)\not\in B$.
\end{itemize}
\end{pro}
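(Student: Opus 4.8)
The plan is to prove the isomorphism by exhibiting mutually inverse homomorphisms between the group $G$ of Proposition \ref{Baby W spre} and the group of Theorem \ref{Baby W pre}, which we already know is isomorphic to $\w$. The key observation is that the generators $x_{(i,j)}$ are redundant: the relation $x_{(i,j)}x_ix_0x_j$ forces $x_{(i,j)}=(x_ix_0x_j)^{-1}=x_jx_0x_i$ (using $x_i^2=x_0^2=x_j^2=1$), so each new generator is simply an abbreviation for a word in the $x_k$'s. Thus I expect $G$ to be a presentation of the same group as in Theorem \ref{Baby W pre}, merely rewritten using these abbreviations, with the relations reshuffled according to which pairs $(i,j)$ lie in $B$.

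First I would let $G_0$ denote the group of Theorem \ref{Baby W pre} (so $G_0\cong\w$), with generators $x_0,\dots,x_\nu$ and relations $x_k^2$ together with $(x_ix_0x_j)^2$ for all $1\le i<j\le\nu$. I would then define a map $\psi:G\longrightarrow G_0$ on generators by $x_k\mapsto x_k$ for $0\le k\le\nu$ and $x_{(i,j)}\mapsto x_jx_0x_i$ for $(i,j)\in B$. To check that $\psi$ is a well-defined homomorphism, I must verify that each defining relation of $G$ maps to a relation in $G_0$: the relations $x_k^2$ are immediate; the relation $x_{(i,j)}^2$ maps to $(x_jx_0x_i)^2$, which equals $1$ in $G_0$ since $(x_ix_0x_j)^2=1$ is equivalent to $(x_jx_0x_i)^2=1$ by the symmetry established in \eqref{zp2} and \eqref{p2}; the relation $x_{(i,j)}x_ix_0x_j$ maps to $x_jx_0x_ix_ix_0x_j=x_jx_0x_0x_j=1$; and the relation $(x_ix_0x_j)^2$ for $(i,j)\notin B$ holds in $G_0$ by definition. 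Note that for the relation $x_{(i,j)}^2$ with $(i,j)\in B$ the identity $(x_ix_0x_j)^2=1$ must itself hold in $G_0$, which it does since it is a defining relation there.

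Conversely, I would define $\eta:G_0\longrightarrow G$ by $x_k\mapsto x_k$ for $0\le k\le\nu$, and check that it is well-defined: the relations $x_k^2$ are clear, and for each pair $1\le i<j\le\nu$ I must show $(x_ix_0x_j)^2=1$ holds in $G$. If $(i,j)\notin B$ this is a defining relation of $G$ directly. If $(i,j)\in B$, then from $x_{(i,j)}x_ix_0x_j=1$ and $x_{(i,j)}^2=1$ in $G$ I obtain $x_ix_0x_j=x_{(i,j)}^{-1}=x_{(i,j)}$, whence $(x_ix_0x_j)^2=x_{(i,j)}^2=1$. Thus $\eta$ is well-defined. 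Finally, I would confirm that $\psi$ and $\eta$ are mutually inverse on generators: clearly $\eta\circ\psi(x_k)=x_k$ and $\psi\circ\eta(x_k)=x_k$, while $\eta\circ\psi(x_{(i,j)})=\eta(x_jx_0x_i)=x_jx_0x_i$, which equals $x_{(i,j)}$ in $G$ by the argument just given (the relations $x_{(i,j)}x_ix_0x_j=1$ and $x_{(i,j)}^2=1$ force $x_{(i,j)}=x_jx_0x_i$). Hence $G\cong G_0\cong\w$.

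The only genuinely delicate point is the interplay at pairs $(i,j)\in B$, where the $x_{(i,j)}$-relations must be shown to encode exactly the same constraint $(x_ix_0x_j)^2=1$ that $G_0$ imposes directly; this is where the involution relation $x_{(i,j)}^2$ and the triangle relation $x_{(i,j)}x_ix_0x_j$ together do the bookkeeping. Once one records that $x_{(i,j)}=x_jx_0x_i$ and that this element is an involution precisely when $(x_ix_0x_j)^2=1$, the equivalence of the two presentations is a routine verification, and no substantive obstacle remains.
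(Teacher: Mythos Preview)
Your approach is essentially identical to the paper's: both construct mutually inverse homomorphisms between $G$ and the group of Theorem~\ref{Baby W pre} by sending $x_{(i,j)}\leftrightarrow x_jx_0x_i$ and checking that the relations on each side are consequences of those on the other. One small correction: the defining relations of $G_0$ are $(x_0x_ix_j)^2$, not $(x_ix_0x_j)^2$, so for $\eta$ to be well-defined you still need the conjugation step $(x_0x_ix_j)^2=x_j(x_jx_0x_i)^2x_j=x_j^2=1$ in $G$, exactly as the paper supplies.
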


\begin{proof}
If $B=\emptyset$ this is the same as Theorem \ref{Baby W pre}.
Suppose that $B\not=\emptyset$. By Theorem \ref{Baby W pre}, $\w$ is isomorphic to the group $G'$, defined by
\begin{itemize}
\item[] generators: $y_i$, $0\leq i\leq \nu$,
\item[] relations:
$y^2_k$, $(y_0y_iy_j)^2$, $0\leq k\leq\nu$, $1\leq i<j\leq\nu$.
\end{itemize}
By the proofs of Theorems \ref{W(V) AP} and \ref{Baby W pre}, this isomorphism is in fact induced by the assignment $w_{\ep+\sg_i}\mapsto y_i$ for $0\leq i\leq\nu$.
We show that $G\cong G'$. For $(i,j)\in B$, we set $y_{(i,j)}:=y_jy_0y_i\in G'$. Let $Y=\{y_k\}_{k=0}^\nu\cup\{y_{(i,j)}\}_{(i,j)\in B}$ and define $\phi:Y\longrightarrow\{x_k\}_{k=0}^\nu\cup\{x_{(i,j)}\}_{(i,j)\in B}$  by $\phi(y_k)=x_k$, for $0\leq k\leq\nu$, and $\phi(y_{(i,j)})=x_{(i,j)}$, for $(i,j)\in B$. It is obvious that $Y$ is a set of generators for $G'$. In G, we have $x_k^2=1$, for $1\leq k\leq\nu$. Since $(x_ix_0x_j)^2=1$, $(i,j)\not\in B$, we have $(x_jx_ix_0)^2=x_j^2=1$. Thus $$(x_0x_ix_j)^2=(x_jx_ix_0)^{-2}=1,$$
for $(i,j)\not\in B$.
Also, $(x_jx_0x_i)^2=x_{(i,j)}^2=1$, $(i,j)\in B$. Thus $$(x_0x_ix_j)^2=x_j(x_jx_0x_i)^2x_j=x_j^2=1,$$ for $(i,j)\in B$. Thus $\phi$ can be extended to an epimorphism $\psi:G'\longrightarrow G$.

On the other hand, in $G'$, we have
$y_{(i,j)}y_iy_0y_j=y_jy_0y_iy_iy_0y_j=1$ and $y_{(i,j)}^2=1$,
for $(i,j)\in B$, $$(y_iy_0y_j)^2=y_j(y_jy_iy_0)^2y_j=y_j(y_0y_iy_j)^{-2}y_j=y_j^2=1,$$ for $(i,j)\not\in B$
and $y_k^2=1$, $0\leq k\leq\nu$. So, any defining relation in $G$ corresponds to a relation in $G'$. Thus $\phi^{-1}$ can be extended to a group homomorphism $\eta:G\longrightarrow G'$. Since $\psi$ is an extension of $\phi$ and $\eta$ is an extension of $\phi^{-1}$, they are inverse of each other and $G\cong G'\cong\w$. This completes the proof. Note that under this isomorphism the generator $x_k$ of $G$ maps to the element $w_{\ep+\sg_k}$ of $\w$, $0\leq k\leq\nu$, and the generator $x_{(i,j)}$ of $G$ maps to the element $w_{\ep+\sg_j}w_\ep w_{\ep+\sg_i}$ of $\w$, $(i,j)\in B$.
\end{proof}

Recall that each $\a\in R^\times$ can be written uniquely in the form $\pm\ep+\sum_{i=1}^\nu s_i\sg_i$ mod $2\Lam$, where $s_i\in\{0,1\}$ for all $i$.
Let $\Pi=\{\a_0,\ldots,\a_m\}$ be as in (\ref{frb}). Set $\supp(\a)=\{i\mid s_i\not=0\}$, and
$$\supp(\Pi)=\{\supp(\a)\mid\a\in\Pi\}.$$
Since $\Pi$ is a reflectable base, $\tau_i$'s represent distinct cosets of $2\Lam$ in $\Lam$, so $\tau_i=\tau_j$ if and only if
$\supp(\a_i)=\supp(\a_j)$.
Here we consider all sets $\supp(\a)$ as ordered sets, namely if $\supp(\a)=\{i_1,\ldots,i_t\}$, then $i_1 <i_2 <\cdots <i_t$.
We call the reflectable base $\Pi$ {\it elliptic-like} if $|\supp(\a)|\in\{0,1,2\}$ for all $\a\in\Pi$, or equivalently
if $|\supp(\a_k)|=2$ for $\nu+1\leq k\leq m$. Since $|\supp(\a)|\leq\nu$ for $\a\in R^\times$, all extended affine root systems of nullity $\leq 2$ are elliptic-like.
Finally we set
$$B_\Pi=\{(i,j)\mid \{i,j\}\in\supp(\Pi),\;1\leq i<j\leq\nu\}.$$
For $(i,j)\in B_\Pi$, we denote by $\a_{i_j}$ the unique element in $\Pi$ with $\supp(\a)=\{i,j\}$.


\begin{pro}\label{Baby Tor Pre}
Let $R$ be an extended affine root system in $A$ and $\Pi$ be the reflectable base for $R$ as in (\ref{frb}). Assume that $\Pi$ is elliptic-like.
Then $\wt$ is isomorphic to the group $\G$, defined by
\begin{itemize}
\item[] generators: $\xt_k$, $0\leq k\leq m$,
\item[] relations:
$\xt^2_k$, $[\xt_k,\xt_s\xt_i\xt_0\xt_j]$, if $\{i,j\}=\supp(\a_s)$; $[\xt_k,(\xt_i\xt_0\xt_j)^2]$, if $\{i,j\}\not\in\supp(\Pi)$, $0\leq k\leq m$, $1\leq i<j\leq\nu$, $\nu+1\leq s\leq m$.
\end{itemize}
\end{pro}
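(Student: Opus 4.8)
The strategy is to apply Theorem \ref{W W(V) CentP}, which reduces the construction of a presentation for $\wt$ to the construction of a presentation for $\w$ of the special form appearing in part (i) of that theorem. The hypothesis that $R$ is an extended affine root system guarantees, by \cite[Lemma 3.18 (i) and Corollary 3.29]{AEAWG}, that $Z(\wt)$ is a free abelian group of finite rank, so Theorem \ref{W W(V) CentP} is indeed applicable. Thus the first task is to exhibit a free basis $\{z_l\}_{l\in L}$ for $Z(\wt)$ together with fixed alternating tuples $f^l\in\alt(\Pi)$ representing them, and then to show that $\w$ admits the presentation from Theorem \ref{W W(V) CentP}(i) with generators $x_\a$, $\a\in\Pi$, and relations $x_\a^2$ and $f^l_G$. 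Once this is in place, Theorem \ref{W W(V) CentP}(ii) hands us the presentation for $\wt$ with generators $\xt_\a$ and relations $\xt_\a^2$ and $[\xt_\a,f^l_{\G}]$, which I would then translate into the explicit generators and relations stated in the Proposition.

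\textbf{Building the presentation for $\w$.} First I would observe that the elliptic-like hypothesis, $|\supp(\a_k)|=2$ for $\nu+1\le k\le m$, is exactly what lets us use the form of presentation established in Proposition \ref{Baby W spre}, applied with the set $B=B_\Pi$. Recall that in that proposition the generators are $x_k$ ($0\le k\le\nu$) together with $x_{(i,j)}$ for $(i,j)\in B$, and the relations are $x_k^2$, $x_{(i,j)}^2$, the straightening relations $x_{(i,j)}x_ix_0x_j$, and $(x_ix_0x_j)^2$ for $(i,j)\notin B$. Since $\Pi$ is elliptic-like, the reflectable base $\Pi$ consists precisely of $\a_0,\dots,\a_\nu$ together with the $\a_{i_j}$ with $\{i,j\}\in\supp(\Pi)$, so the indices $k$ with $\nu+1\le k\le m$ are in natural bijection with the pairs $(i,j)\in B_\Pi$ via $\a_k=\a_{i_j}$. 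I would therefore identify $x_k\leftrightarrow x_{(i,j)}$ and read Proposition \ref{Baby W spre} as giving $\w$ the presentation with generators $x_k$, $0\le k\le m$, and relations $x_k^2$, the relations $x_s x_i x_0 x_j$ (stating that $\a_s$ straightens to $w_{\ep+\sg_j}w_\ep w_{\ep+\sg_i}$) for $\{i,j\}=\supp(\a_s)$, $\nu+1\le s\le m$, and $(x_ix_0x_j)^2$ for $\{i,j\}\notin\supp(\Pi)$.

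\textbf{Matching the relations as central elements.} The key point is now to recognize that these relations are exactly of the form demanded by Theorem \ref{W W(V) CentP}(i), namely $x_\a^2$ together with $f^l_G$ for a free basis $\{z_l\}$ of $Z(\wt)$. Concretely, each relation $x_s x_i x_0 x_j$ corresponds to the element $w_{\a_s}w_{\ep+\sg_i}w_\ep w_{\ep+\sg_j}\in\w$, and each relation $(x_ix_0x_j)^2$ to $(w_{\ep+\sg_i}w_\ep w_{\ep+\sg_j})^2$; by Proposition \ref{SM lem} both of these, viewed as words in $\wt$, are alternating tuples lying in $Z(\wt)$. The substantive step, and the one I expect to be the main obstacle, is to verify that the corresponding central elements $z_l$ form a \emph{free basis} of $Z(\wt)$ (not merely a generating set), so that Theorem \ref{W W(V) CentP} applies with these specific $f^l$'s and no redundancy is introduced; this requires a rank count matching $|B_\Pi|+\binom{\nu}{2}-|B_\Pi|=\binom{\nu}{2}$ against the known rank of $Z(\wt)$ from \cite{AEAWG}, and a check of independence. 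Granting this, part (i) of Theorem \ref{W W(V) CentP} holds for these data, so part (ii) yields the presentation of $\wt$ with relations $\xt_k^2$ and $[\xt_k, f^l_{\G}]$; unwinding $f^l$ into $\xt_s\xt_i\xt_0\xt_j$ and $(\xt_i\xt_0\xt_j)^2$ gives exactly the commutator relations $[\xt_k,\xt_s\xt_i\xt_0\xt_j]$ and $[\xt_k,(\xt_i\xt_0\xt_j)^2]$ in the statement, completing the proof.
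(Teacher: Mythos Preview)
Your proposal is correct and follows essentially the same three-step route as the paper: establish a free basis of $Z(\wt)$ expressed via alternating tuples in $\Pi$, use Proposition~\ref{Baby W spre} with $B=B_\Pi$ to put the $\w$-presentation in the form required by Theorem~\ref{W W(V) CentP}(i), and then read off the $\wt$-presentation from part~(ii). The only point you leave somewhat unspecified is the ``check of independence'' for the central elements; the paper handles this not by an internal argument but by citing \cite[Proposition 2.2(vi)]{ASFP}, which gives an explicit free basis $\{c_{ij}\}\cup\{c_{ij}^2\}$ of $Z(\wt)$, and then verifying by direct computation on the $\lambda_k$'s that the proposed $z_{ij}$'s coincide with these basis elements.
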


\begin{proof}
We proceed with the proof in the following steps.

Step1. We show that $Z(\wt)$ is a free abelian group with
basis $\{z_{ij}\;|\;1\leq i<j\leq\nu\}$, where $z_{ij}$'s are defined as follows.
If $\{i,j\}\in\supp(\Pi)$ set $z_{ij}:=w_{\epsilon+\tau_{i_j}}w_{\epsilon+\tau_i}w_{\epsilon}w_{\epsilon
+\tau_j}$, where $i_j$ is the unique integer satisfying $\supp(\a_{i_j})=\{i,j\}.$ If $\{i,j\}\not\in\supp(\Pi)$ set
$z_{ij}:=(w_{\epsilon+\tau_i}w_{\epsilon}w_{\epsilon+\tau_j})^2$. From \cite[Lemma 3.18(i) and Corollary
3.29]{AEAWG}, we know that the center  $Z(\wt)$ of $\wt$ is a free
abelian group of rank $\nu(\nu-1)/2$. For $1\leq i<j\leq \nu$ define $c_{ij}\in\hbox{GL}(\tilde{\v})$ by
$$c_{ij}(v)=v,\quad\hbox{and}\quad c_{ij}\lambda_k=\lambda_k-\delta_{kj}\sigma_i+\delta_{ki}\sigma_j,\quad(v\in\v,\;1\leq k\leq\nu).$$
By \cite[Proposition 2.2
(vi)]{ASFP},
$$\{c_{ij}|(i,j)\in
B_{\Pi}\}\cup\{c_{ij}^2|(i,j)\not\in
B_{\Pi}\}$$ is a free basis for the group $Z(\wt)$.
We are done if we show that $z_{ij}=c_{ij}$ for $(i,j)\in B_\Pi$ and $z_{ij}=c_{ij}^2$ for $(i,j)\not\in B_\Pi$.
Now for $(i,j)\in B_{\Pi}$, we have
$\epsilon+\sigma_i+\sigma_j\in R^\times$, and so $z_{ij}\in
\widetilde{\w}$. Moreover, by Lemma \ref{W AR
Lem} (or a simple verification),
$$z_{ij|_\v}=\mathrm{id}_\v$$ and
$$z_{ij}\lambda_k=\lambda_k-\delta_{kj}\sigma_i+\delta_{ki}\sigma_j=c_{ij}\lambda_k,$$
for $1\leq k\leq\nu$. Thus $z_{ij}=c_{ij}$. For $(i,j)\not\in B_{\Pi}$, we have
$$z_{ij|_\v}=\mathrm{id}_\v$$ and
$$z_{ij}\lambda_k=\lambda_k-2\delta_{kj}\sigma_i+2\delta_{ki}\sigma_j=c_{ij}^2\lambda_k.$$
Thus $z_{ij}=c_{ij}^2$.

Step 2. We show that the assignment $w_{\a_k}\mapsto x_{\a_k}$,
$0\leq k\leq m$, induces an isomorphism from the Weyl group $\w$
onto the group $G$ defined by
\begin{itemize}
\item[] generators: $x_{\a_k}$, $0\leq k\leq m$, \item[]
relations: $x^2_{\a_k}$, $0\leq k\leq m$, $f_G^{i_j}$, $1\leq
i<j\leq\nu$, where  $f^{i_j}:=(\a_{i_j},\a_i,\a_0,\a_j)$ for $(i,j)\in B_\Pi$,  and $f^{i_j}:=(\a_i,\a_0,\a_j,\a_i,\a_0,\a_j)$ if
$(i,j)\not\in B_\Pi$.
\end{itemize}
(Recall that for $(i,j)\in B_\Pi$, $\a_{i_j}$ is the unique element in $\Pi$ with $\supp(\a_{i_j})=\{i,j\}$.)
By Proposition \ref{Baby W spre}, the assignment $w_{\a_k}\mapsto
x_{k}$, $0\leq k\leq m$, induces an isomorphism from $\w$ onto the
group $G'$ defined by
\begin{itemize}
\item[] generators: $x_k$, $x_{(i,j)}$, $0\leq k\leq\nu$,
$(i,j)\in B_{\Pi}$,
\item[] relations:
$x^2_k$, $x_{(i,j)}^2$, $0\leq k\leq\nu$, $(i,j)\in B_{\Pi}$,
$x_{(i,j)}x_ix_0x_j$, $(i,j)\in B_{\Pi}$, $(x_ix_0x_j)^2$,
$(i,j)\not\in B_{\Pi}$.
\end{itemize}
Using the correspondence $x_{\a_i}\leftrightarrow x_i$ for $0\leq i\leq\nu$ and $x_{\a_k}\leftrightarrow x_{(i,j)}$ for $\nu+1\leq k\leq m$ with
$\supp(\a_k)=\{i,j\}$, the defining generators and relations of the groups $G$ and $G'$ coincide and so we identify them.

Step 3. By Step 1, Step 2 and Theorem \ref{W W(V) CentP}, $\wt$ is isomorphic to the group $\G$ defined by
\begin{itemize}
\item[] generators: $\xt_{\a_k}$, $0\leq k\leq m$, \item[]
relations: $\xt^2_{\a_k}$, $0\leq k\leq m$,
$[\xt_{\a_k},f^{ij}_{\G}]$, $1\leq i<j\leq\nu$, where
$f^{ij}=(\a_{i_j},\a_i,\a_0,\a_j)$ if $\{i,j\}\in\supp(\Pi)$,
and $f^{ij}=(\a_i,\a_0,\a_j,\a_i,\a_0,\a_j)$ if $\{i,j\}\not\in
\supp(\Pi)$.
\end{itemize}
Now using the correspondence $x_{\a_i}\leftrightarrow x_i$ for $0\leq i\leq\nu$ and $x_{\a_{i_j}}\leftrightarrow x_s$ for $\nu+1\leq s\leq m$ with
$\supp(\a_s)=\{i,j\}$, we have $\wt$ is isomorphic to the group defined by
\begin{itemize}
\item[] generators: $\xt_k$, $0\leq k\leq m$,
\item[] relations:
$\xt^2_k$, $[\xt_k,\xt_s\xt_i\xt_0\xt_j]$, if $\{i,j\}=\supp(\a_s)$; $[\xt_k,(\xt_i\xt_0\xt_j)^2]$, if $\{i,j\}\not\in\supp(\Pi)$, $0\leq k\leq m$, $1\leq i<j\leq\nu$, $\nu+1\leq s\leq m$.
\end{itemize}
\end{proof}

\begin{rem}\label{saito}
(i) We recall that if $\nu=0$  ($\nu=1$), then $R$ is a finite (affine) root system of type $A_1$.
Therefore Proposition \ref{Baby W spre}, together with Proposition \ref{Baby Tor Pre}, reproduces the following known presentations for $\w$ and $\wt$:
$$\w\cong(x_0\;|\;x_0^2)\cong\wt\cong \mathbb{Z}_2,\quad(\nu=0),$$
$$\w\cong(x_0,x_1\;|\;x_0^2,x_1^2)\cong\wt\cong \mathbb{Z}_2*\mathbb{Z}_2,\quad (\nu=1).$$

(ii) Proposition \ref{Baby Tor Pre} provides a finite presentation for the baby and the toroidal extended affine Weyl groups of type $A_1$ of nullity 2.
In this case, these are the only possible extended affine Weyl groups. Considering the nature of the relations one might consider
this presentation as a generalized Coxeter presentation. We encourage the interested reader to compare our defining set of generators and relations with those given in \cite{ST} for types $A_1^{(1,1)}$ and $A_1^{(1,1)^*}$.
\end{rem}

\section{\bf Appendix: A geometric approach}\setcounter{equation}{0}\label{geometric}
In this section, we provide a geometric approach to the proof of Theorem \ref{Baby W pre}. For
$w\in\w$ suppose that $w_{\a_1}\cdots w_{\a_k}$ is an
expression of $w$ with respect to $R^\times$. We define
\begin{equation}\label{Tfun}
\vep(w):=(-1)^k\andd
T(w):=\sum_{i=1}^k(-1)^{k-i}\sgn(\a_i)p(\a_i).
\end{equation}
From Lemma \ref{action lem}, it follows that the maps
$\vep:\w\longrightarrow\{-1,1\}$ and $T:\w\longrightarrow\Lam$
are well-defined, that is their definitions are independent of a particular
choice of given expressions for an element of $\w$.
Furthermore, $\vep$ is a group homomorphism and
\begin{equation}\label{newtemp1}
T(w_1w_2)=\vep(w_2)T(w_1)+T(w_2),\quad(w_1,w_2\in\w).\end{equation}
One can easily see that for $\a\in R$,
\begin{equation}\label{Raction}
w(\a)=\vep(w)\sgn(\a)\ep+p(\a)-2\sgn(\a)T(w),
\end{equation}
and so
$w$ is uniquely determined by $\vep(w)$ and $T(w)$.

For $\sg=\sum_{i=1}^\nu k_i\sg_i\in\Lam$ and $\eta\in\{\pm1\}$,
consider the $\nu$-simplex
$$B_{\sg,\eta}:=\{\sum_{i=1}^\nu(2k_i+\eta t_i)\sg_i\;|\;t_i\geq0,\;0\leq t_1+\cdots+t_\nu\leq 1\}.$$
Let $\bc=\{B_{\sg,\eta}\mid\sg\in\Lam,\;\eta\in\{\pm1\}\}$. Now
from (\ref{newtemp1}) and the fact that  $\vep$ is a group
homomorphism, it follows that $\w$ acts on $\bc$ by
\begin{equation}\label{GeoAction}
w\cdot B_{\sg,\eta}=B_{\sg+\eta T(w),\vep(w)\eta}.
\end{equation}
If $w\cdot B_{\sg,\eta}=B_{\sg,\eta}$, then $T(w)=0$ and $\vep(w)=1$, so $w=1$. This shows that $\w$ acts on $\bc$ freely,
in particular the action is faithful.
Moreover, (\ref{GeoAction})
shows that $\bc=\w\cdot B_{0,1}$, namely the action is transitive.

\begin{rem}
In parts (i)-(iii) below, we explain our intrinsic motivation for the action of $\w$ on $\bc$ defined in (\ref{GeoAction}) (a similar idea is given in \cite[\S 6.6]{Ka}).

(i) We show how the action of $\w$ on $R$ can be transferred to the
action (\ref{GeoAction}) on $\bc$. From (\ref{Raction}), it is easy
to see that the action of $\w$ on $\pm\ep+2\Lam$ is faithful. Using
the map $\eta\ep+2\sg\mapsto B_{\sg,\eta}$, $\eta\in\{\pm1\}$,
$\sg\in\Lam$, we can identify the set $\pm\ep+2\Lam$ with $\bc$. We
use this identification to transfer the action of $\w$ to an action,
denoted by $\bullet$, on $\bc$. In fact, since for $w\in\w$ we have
$$w(\eta\ep+2\sg)=\vep(w)\eta\ep+2(\sg-\eta T(w)),$$
then the corresponding action on $\bc$ reads as
$$w\bullet B_{\sg,\eta}=B_{\sg-\eta T(w),\vep(w)\eta}.$$
The action of $\w$ on $\pm\ep+2\Lam$, and so on $\bc$, is faithful
and transitive. The two actions $\bullet$ and $\cdot$ on $\bc$ are
related as follows:
$$w_\ep w w_\ep \bullet B_{\sg,\eta}=w_\ep w\bullet B_{\sg,-\eta}=w_\ep\bullet B_{\sg+\eta T(w),-\vep(w)\eta}=B_{\sg+\eta T(w),\vep(w)\eta}=w\cdot B_{\sg,\eta}.$$

(ii) Since we have identified
$\ep+2\sg$ and $-\ep+2\sg$ with $B_{\sg,1}$ and $B_{\sg,-1}$,
respectively, one can interpret this identification as a
polarization of elements of $2\Lam$ through elements of
$R^\times$, i.e., we consider  $B_{\sg,1}$ and $B_{\sg,-1}$ as
positive and negative poles for each element $2\sg\in2\Lam$,
respectively. In this way, the action of $\w$ on $\bc$ can be
interpreted on $2\Lam$ as a translation together with a polarization.

(iii) In contrast to the action $\bullet$, the action $\cdot$ on
$\bc$ given in (\ref{GeoAction}) has this ``nice'' property that for
any $\a\in \Pi_0$, $w_\a$ takes any simplex in $\bc$ to another
simplex in $\bc$, topologically connected to it. This has been our
main reason for choosing the action $\cdot$ instead of $\bullet$.
This completes our remark.
\end{rem}

For $\a_1,\dots,\a_k\in R_t^\times$, we define {\it the path corresponding to} $w_{\a_1}\cdots w_{\a_k}$ in $\bc$ starting at $B_{\sg,\eta}$
to be the $(k+1)$-tuple
$$(B_{\sg,\eta},w_{\a_k}\cdot B_{\sg,\eta},\dots,w_{\a_1}\cdots w_{\a_k}\cdot B_{\sg,\eta}).$$
We denote this path by $P_{B_{\sg,\eta}}(w_{\a_1}\cdots w_{\a_k})$, and we say that this is a path of {\it length} $k$. If $\sg=0$ and $\eta=1$,
we simply denote this path by $P(w_{\a_1}\cdots w_{\a_k})$.
We call a path a \textit{loop based at}  $B_{\sg,\eta}$,
if its starting and ending points are $B_{\sg,\eta}$.
We call the single tuple $(B_{\sg,\eta})$, the {\it trivial loop based at} $B_{\sg,\eta}$ and denote it by $P_{B_{\sg,\eta}}(1)$. Since $\w$ acts freely on $\bc$,
the path $P_{B_{\sg,\eta}}(w_{\a_1}\cdots w_{\a_k})$ is a loop if and only if $w_{\a_1}\cdots w_{\a_k}=1$.
Thus, by Proposition \ref{cor1}, the path $P_{B_{\sg,\eta}}(w_{\a_1}\cdots w_{\a_k})$
is a loop if and only if $(\a_1,\dots,\a_k)\in\alt(R_t^\times)$. Finally, note that $\w$ acts on the set of paths in $\bc$, corresponding to
$w_{\a_1}\cdots w_{\a_k}\in\w$, by
$$w\cdot P_{B_{\sg,\eta}}(w_{\a_1}\cdots w_{\a_k}):=P_{w\cdot B_{\sg,\eta}}(w_{\a_1}\cdots w_{\a_k}),\quad (w\in\w).$$

Let $\pc$ be the set of all paths corresponding to $w_{\ep+\sg_i}^2$, for $0\leq i\leq\nu$, and $(w_\ep w_{\ep+\sg_i}w_{\ep+\sg_j})^2$, for $1\leq i<j\leq\nu$. Any path in $\pc$ is a loop of length either 2 or 6.

\begin{DEF}\label{paths}
(i) Let $P=(B_{\sg_1,\eta_1},\dots,B_{\sg_n,\eta_n})$ be a path in $\bc$. For $1\leq i<j\leq n$,
we call $(B_{\sg_i,\eta_i},\dots,B_{\sg_j,\eta_j})$ a sub-path of $P$. The trivial loops $(B_{\sg_i,\eta_i})$, for $1\leq i\leq n$, are called {\it trivial sub-loops}.

(ii) Let $P_1=(B_{\sg_1,\eta_1},\dots,B_{\sg_n,\eta_n})$ and $P_2=(B_{\sg_n,\eta_n},\dots,B_{\sg_k,\eta_k})$ be two paths in $\bc$,
where the ending point of $P_1$ is the same as the starting point of $P_2$.
We define $$P_1\cdot P_2:=(B_{\sg_1,\eta_1},\dots,B_{\sg_n,\eta_n},\dots,B_{\sg_k,\eta_k}).$$

(iii) Let $P_1$ and $P_2$ be two paths in $\bc$. A {\it move} of $P_1$ is obtained either by replacing a trivial sub-loop based at $B_{\sg,\eta}$
with a loop based at $B_{\sg,\eta}$ from $\pc$, or by replacing a sub-loop based at $B_{\sg,\eta}$ which is an element of $\pc$
with the trivial loop $(B_{\sg,\eta})$.
We say that $P_1$ can be {\it moved} to $P_2$ and we write $P_1\rightarrow P_2$, if $P_2$ is obtained from $P_1$ by a finite number of moves.
\end{DEF}

It is easy to see that
if $P_{B_{\sg,\eta}}(w_{\a_1}\cdots w_{\a_k})\rightarrow P_{B_{\sg,\eta}}(w_{\b_1}\cdots w_{\b_n})$
then
\begin{equation}\label{pathact}
w\cdot P_{B_{\sg,\eta}}(w_{\a_1}\cdots w_{\a_k})\rightarrow w\cdot P_{B_{\sg,\eta}}(w_{\b_1}\cdots w_{\b_n}),
\end{equation}
and
\begin{equation}\label{movepro}
P_{B_{\sg,\eta}}(w_{\a_1}\cdots w_{\a_k})\cdot P'\rightarrow P_{B_{\sg,\eta}}(w_{\b_1}\cdots w_{\b_n})\cdot P',
\end{equation}
for any $w\in\w$ and any path $P'$ in $\bc$ for which the product on the left of (\ref{movepro}) is defined.

Recall that $\w=\w_b=\la w_\a\mid\a\in\Pi_0=\{\ep,\ep+\sg_1,\dots,\ep+\sg_\nu\}\ra$.
%
%
%

\begin{thm}\label{Baby W pre-geo}
Let $R$ be an extended affine root system of type $A_1$ and nullity $\nu$. Then the Weyl group $\w$ of $R$ is isomorphic to the group $G$, defined by
\begin{itemize}
\item[] generators: $x_i$, $0\leq i\leq\nu$,
\item[] relations:
$x^2_k$, $(x_0x_ix_j)^2$, $0\leq k\leq\nu$, $1\leq i<j\leq\nu$.
\end{itemize}
\end{thm}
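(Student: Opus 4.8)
The plan is to establish this via the geometric machinery built up in the appendix, giving a second proof of Theorem \ref{Baby W pre} that is conceptually transparent. Since Theorem \ref{Baby W pre-geo} is literally the same statement as Theorem \ref{Baby W pre}, the isomorphism $\w\cong G$ is already known; the point here is to reprove it geometrically using the free, transitive action of $\w$ on the set of simplices $\bc$. First I would note that, exactly as in the algebraic proof, the assignment $x_i\mapsto w_{\ep+\sg_i}$ (with $\sg_0:=0$) extends to a well-defined epimorphism $\psi\colon G\longrightarrow\w$, because each defining relation $x_k^2$ and $(x_0x_ix_j)^2$ maps to an element of $\alt(\Pi_0)$ and hence to the identity in $\w$ by Proposition \ref{cor1}. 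All the work lies in proving injectivity, and this is where the paths and moves of Definition \ref{paths} enter.

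The key idea is to translate relations in $\w$ into closed loops in $\bc$ and then show every such loop can be contracted using only the elementary loops in $\pc$. More precisely, suppose $w_{\a_1}\cdots w_{\a_k}=1$ in $\w$ with each $\a_i\in\Pi_0$; since $\w$ acts freely on $\bc$, the path $P(w_{\a_1}\cdots w_{\a_k})$ is a loop based at $B_{0,1}$. I would prove the central lemma: \emph{any loop in $\bc$ built from reflections in $\Pi_0$ can be moved, via finitely many moves of the type in Definition \ref{paths}(iii), to the trivial loop.} Granting this, each relation $w_{\a_1}\cdots w_{\a_k}=1$ is realized as a sequence of insertions and deletions of the basic loops of $\pc$; but the loops in $\pc$ are precisely the geometric incarnations of the relations $x_i^2$ and $(x_0x_ix_j)^2$, together with the consequences $x_rx_sx_t=x_tx_sx_r$ derived in the algebraic proof. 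Reading this sequence of moves back in $G$ then exhibits $x_{\a_1}\cdots x_{\a_k}$ as a product of conjugates of the defining relators, hence trivial in $G$. This shows $\ker\psi=1$, so $\psi$ is an isomorphism.

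The heart of the matter is the loop-contraction lemma, and I would prove it by induction on the length of the loop, mirroring the inductive structure of the proof of Theorem \ref{Baby W pre}. The base cases (lengths $2$, $4$, $6$) correspond to Remark \ref{eaalt}(iii): a loop of length $2$ is already in $\pc$ or is a trivial backtrack $B\to B'\to B$; the length-$6$ case reduces, after using moves to expose a sub-loop of $\pc$, either to the basic relator $(w_\ep w_{\ep+\sg_i}w_{\ep+\sg_j})^2$ directly, or to the commutation $x_rx_sx_t=x_tx_sx_r$. For the inductive step, given a longer loop I would locate (using Remark \ref{eaalt}(ii), so that each generator occurs an even number of steps apart) an adjacent cancelling pair $\a_{i}=\a_{i+1}$, or use the commutation moves to shuffle a repeated simplex next to its partner, thereby exposing either a backtrack or a shorter elementary sub-loop to delete; the invariance properties \eqref{pathact} and \eqref{movepro} guarantee these local moves extend to the whole path. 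The main obstacle, and the part requiring genuine care, is ensuring that the bookkeeping of which simplices are adjacent in $\bc$ is compatible with the purely combinatorial shuffling of Remark \ref{eaalt}---that is, verifying that a move legal at the level of tuples $(\a_1,\dots,\a_k)$ always corresponds to a legal geometric move on the underlying path of simplices. This is exactly the content of the ``nice'' adjacency property singled out in Remark (iii) of the appendix, and it is what makes the geometric argument go through.
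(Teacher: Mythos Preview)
Your proposal is correct and follows essentially the same route as the paper: set up the epimorphism $G\to\w$, then prove injectivity by showing every $\Pi_0$-loop in $\bc$ contracts to the trivial loop via $\pc$-moves, arguing by induction on the loop length with base cases $m\leq 3$ and the same inductive reduction (either an adjacent repeat $j_r=j_{r+1}$, or the commutation move $P(x_rx_sx_t)\rightarrow P(x_tx_sx_r)$ to migrate a repeated generator next to its partner). The one place your sketch is slightly light is the $m=3$ case $(x_rx_sx_t)^2$ with $0\notin\{r,s,t\}$, which is not in $\pc$ and cannot be handled by a single move---the paper disposes of it by an explicit insertion of $x_0^2$ followed by a nontrivial chain of moves reducing it to $(x_0x_sx_t)^2$.
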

\begin{proof}
By Theorem \ref{W(V) AP}, $\w$ has the presentation
\begin{itemize}
\item[] generators: $w_\alpha$, $\alpha\in\Pi_0$,
\item[] relations: $w_{\alpha_1}\cdots w_{\alpha_k}$, $(\a_1,\ldots,\a_k)\in\alt(\Pi_0)$.
\end{itemize}
It means that every relation in $\w$ corresponds to a loop in $\bc$. Let us for $0\leq i\leq\nu$ denote $w_{\ep+\sg_i}$ by $x_i$. Then the theorem is proved
 if we show that, for $\sg\in\Lam$ and $\eta\in\{\pm1\}$, any loop based at $B_{\sg,\eta}$ corresponding to $x_{i_1}\cdots x_{i_k}$ can be moved to the trivial loop $(B_{\sg,\eta})$. Now, using (\ref{pathact}) and the fact that action of $\w$ on $\bc$ is transitive, it is enough to show that any loop based at $B_{0,1}$
  can be moved to the trivial loop $(B_{0,1})$. We show this by induction on the length $2m$ of a loop based at $(B_{0,1})$.

Unless otherwise mentioned, all loops are considered based at $B_{0,1}$. First, we show that the assertion holds for $m=1,2,3$.
Let $m=1$. From Remark \ref{eaalt} (iii), we know that any loop of length $2$ corresponds to $x_i^2$, for $0\leq i\leq\nu$.
Thus any loop of length $2$ is an element of $\pc$ and so by definition can be moved to the trivial loop. Next,
let $m=2$. By Remark \ref{eaalt} (iii), a loop of length 4 is either of the form $P(x_i^2x_j^2)$ or $P(x_ix_j^2x_i),$
$0\leq i,j\leq\nu$. Each of these loops  can be moved to the trivial loop as follows:
$$P(x_i^2x_j^2)=P(x_j^2)\cdot P(x_i^2)\rightarrow P(x_j^2)\rightarrow P(1)$$ and $$P(x_ix_j^2x_i)=P(x_i)\cdot P_{x_i\cdot B_{0,1}}(x_j^2)\cdot P_{x_i\cdot B_{0,1}}(x_i)\rightarrow P(x_i^2)\rightarrow P(1).$$
Let $m=3$ and consider a loop corresponding to $x=x_{j_1}\cdots x_{j_6}$. Then $(\a_{j_1},\dots,\a_{j_6})\in\alt(\Pi_0)$. If
$(\a_{j_1},\dots,\a_{j_6})$ contains an alternating 4-tuple
$f'=(\a_{j_s},\ldots,\a_{j_{s+3}})$ and $y$ is the element in
$\w$ corresponding to $f'$, then $x$ has to have one of the
forms $x_j^2y$, $x_jyx_j$ or $yx_j^2$, so by the cases $m=1,2$,
the loop corresponding to $x$ can be moved to the trivial loop. So we may assume that
$(\a_{j_1},\ldots,\a_{j_6})$ contains no alternating 4-tuple
$f'$ as above.
By our assumption we know that the loop corresponding to $(x_0x_ix_j)^2$, for $1\leq i<j\leq\nu$, is an element of $\pc$, so by definition
it can be moved to the trivial loop. Now, consider $(x_jx_ix_0)^2$, the inverse of $(x_0x_ix_j)^2$.
Since $P((x_0x_ix_j)^2)\rightarrow P(1)$, using (\ref{movepro}), we have
\begin{eqnarray*}
P(x_0x_ix_jx_0x_i)&\rightarrow&P(x_j^2)\cdot P(x_0x_ix_jx_0x_i)\\
&=&P(x_j)\cdot P_{x_j\cdot B_{0,1}}((x_0x_ix_j)^2)\\
&\rightarrow&P(x_j).
\end{eqnarray*}
By repeating this process, we obtain $P((x_jx_ix_0)^2)\rightarrow P(1)$.
Using similar arguments, we conclude that
\begin{equation}\label{atleat}
\hbox{if }0\in\{r,s,t\}\quad\hbox{then}\quad P((x_rx_sx_t)^2)\rightarrow P(1).
\end{equation}
 Also if $0\in\{r,s,t\}$, we have
\begin{equation}\label{halfloop1}
\begin{split}
P(x_rx_sx_t)&\rightarrow P((x_tx_sx_t)^2)\cdot P(x_rx_sx_t)\\
&=P(x_tx_sx_r)\cdot P_{x_tx_sx_rB_{0,1}}(x_rx_sx_t^2x_sx_r)\\
&\rightarrow P(x_tx_sx_t).\\
\end{split}
\end{equation}
Now to finish the case $m=3$, consider the element $x=(x_rx_sx_t)^2$, where none of $r,s$ and $t$ is zero. We have
\begin{eqnarray*}
P(x)&\rightarrow& P(x)\cdot P(x_0^2)\\
&=&P(x_tx_rx_sx_t)\cdot P_{x_tx_rx_sx_t\cdot B_{0,1}}(x_sx_rx_0)\cdot P_{x_sx_rx_0x_tx_rx_sx_t\cdot B_{0,1}}(x_0)\\
&\rightarrow&P(x_sx_t)\cdot P_{x_sx_t\cdot B_{0.1}}(x_0^2)\cdot P_{x_sx_t\cdot B_{0,1}}(x_rx_0x_tx_r)\\
& &\cdot P_{x_rx_0x_tx_rx_sx_t\cdot B_{0,1}}(x_t^2)\cdot P_{x_rx_0x_tx_rx_sx_t\cdot B_{0,1}}(x_0x_s)\\
&=&P(x_0x_sx_t)\cdot P_{x_0x_sx_t\cdot B_{0,1}}((x_tx_rx_0)^2)\cdot P_{x_0x_sx_t\cdot B_{0,1}}(x_0x_sx_t)\\
&\rightarrow& P(x_0x_sx_t)\cdot P_{x_0x_sx_t\cdot B_{0,1}}(x_0x_sx_t)\\
&=&P((x_0x_sx_t)^2)\\
&\rightarrow&P(1).
\end{eqnarray*}
Thus any loop of length 6 corresponding to expressions with respect to $\Pi_0$ can be moved to the trivial loop.
Also using the same argument as in (\ref{halfloop1}), we get
\begin{equation}\label{halfloop}
P(x_rx_sx_t)\rightarrow P(x_tx_sx_r)\quad\quad(0\leq r,s,t\leq\nu).
\end{equation}

Now, we assume that $m>3$ and that any loop of length smaller than $2m$ can be moved to a trivial loop. Let $P(x)$ be a loop of length $2m$, where $x=x_{i_1}\cdots x_{i_{2m}}$. First assume that for some $1\leq r\leq 2m-1$, $j_r=j_{r+1}$.
Since $P_{B_{\sg,p}}(x_{j_r}^2)\rightarrow P_{B_{\sg,p}}(1)$, we have
$$P(x)\rightarrow P(x_{j_1}\cdots x_{j_{r-1}} x_{j_{r+2}} \cdots x_{j_{2m}}).$$
Now since $(\a_{j_1},\dots,\a_{j_{r-1}},\a_{j_{r+2}},\dots,\a_{j_{2m}})$
is an alternating $(2m-2)$-tuple,  $P(x_{\a_{j_1}}$ $\cdots x_{\a_{j_{r-1}}} x_{\a_{j_{r+2}}} \cdots x_{\a_{j_{2m}}})$
is a loop of length $2m-2$ in $\bc$ and so by induction steps it can be moved to the trivial loop. So, we may assume that
$j_r\not=j_{r+1}$, for all $1\leq r\leq 2m-1$. From Remark
\ref{eaalt}(ii), the root $\a_{j_1}$ appears in
$(\a_{j_1},\dots,\a_{j_{2m}})$ in an even number of times and there is an even
integer $2\leq s\leq 2m$ such that $j_1=j_s$. From (\ref{halfloop}), we
have
$$P(x)\rightarrow P(x_{j_3}x_{j_2}x_{j_1}x_{j_4}\cdots x_{j_{2m}}).$$
By repeating this process, we can move $x_{j_1}$ adjacent to $x_{j_s}$, namely
$$P(x)\rightarrow P(x_{j_3}x_{j_2}x_{j_5}x_{j_4}\cdots x_{j_{s-1}}x_{j_{s-2}}x_{j_1}x_{j_s}\cdots x_{j_{2m}}).$$
By Remark \ref{eaalt}(i), the $2m$-tuple
$$(\a_{j_3},\a_{j_2},\a_{j_5},\a_{j_4}\dots \a_{j_{s-1}},\a_{j_{s-2}},\a_{j_1},\a_{j_s},\dots,\a_{j_{2m}})$$ is alternating. Thus $(\a_{j_3},\a_{j_2},\a_{j_5},\a_{j_4}\dots \a_{j_{s-1}},\a_{j_{s-2}},\a_{j_{s+1}},\dots,\a_{j_{2m}})$ is an alternating  $(2m-2)$-tuple and
$$P(x)\rightarrow P(x_{j_3}x_{j_2}x_{j_5}x_{j_4}\cdots x_{j_{s-1}}x_{j_{s-2}}x_{j_{s+1}}\cdots x_{j_{2m}}).$$
By induction steps, the right hand side, which is a loop in $\bc$ of length $2m-2$, can be moved to the trivial loop. Thus every loop in $\bc$ can be moved to the trivial loop.
\end{proof}

We conclude this section with the following example which gives a geometric illustration of the method we used in the proof of  Theorem \ref{Baby W pre-geo}. We use the same notation as in Sections \ref{RedPre} and \ref{geometric}.
\begin{exa}
Let $\v^0=\mathbb{R}\sg_1\oplus\mathbb{R}\sg_2$ and
$w=w_{\ep+\sg_2}w_{\ep}w_{\ep+\sg_2}w_{\ep+\sg_1}w_{\ep}w_{\ep+\sg_1}w_{\ep}w_{\ep+\sg_2}$ $w_{\ep+\sg_1}w_{\ep+\sg_2}w_{\ep+\sg_1}w_{\ep}.$ Since $(\sg_2,0,\sg_2,\sg_1,0,\sg_1,0,\sg_2,\sg_1,\sg_2,\sg_1,0)$ is an alternating $12$-tuple, $w$ is a relation in $\w$. Now, we use our approach to illustrate geometrically how the path $P(w)$ can be moved to the
trivial path $P(1)$. With the notations of Theorem \ref{Baby W pre-geo}, we write $w=x_2x_0x_2x_1x_0x_1x_0x_2x_1x_2x_1x_0$, where $x_i=w_{\ep+\sg_i}$ for $0\leq i\leq\nu$. We have
\begin{eqnarray*}
P(w)=(B_{0,1},B_{0,-1},B_{-\sg_1,1},B_{\sg_2-\sg_1,-1},B_{\sg_2-2\sg_1,1},B_{2\sg_2-2\sg_1,-1},B_{2\sg_2-2\sg_1,1},\;\;\\
B_{2\sg_2-\sg_1,-1},B_{2\sg_2-\sg_1,1},B_{2\sg_2,-1},B_{\sg_2,1},B_{\sg_2,-1},B_{0,1}).
\end{eqnarray*}
\begin{figure}[!hpp]
\caption{The complex $P(w)$ in $\v^0$}\label{pic1}
\setlength{\unitlength}{10pt}
\begin{picture}(20,15)(0,0)
\put(0,4){\vector(1,0){20}}
\put(16,0){\vector(0,1){14}}
\thicklines
\put(12,4){\line(1,0){6}}
\put(16,2){\line(0,1){10}}
\put(20,4.5){\makebox{\small{$\sg_1$}}}
\put(16.5,14){\makebox{\small{$\sg_2$}}}
\put(6,12){\line(1,-1){10}} \put(6,12){\line(1,0){10}}
\put(8,14){\line(0,-1){6}} \put(8,14){\line(1,-1){4}}
\put(12,10){\line(0,1){4}} \put(12,14){\line(1,-1){6}}
\put(18,8){\line(-1,0){4}} \put(14,8){\line(1,-1){4}}
\put(8,8){\line(1,0){4}}\put(12,8){\line(0,-1){4}}
\put(16.2,4.2){\makebox{\small{$B_0$}}}
\put(15,3){\makebox{\small{$B_1$}}}
\put(12.2,4.2){\makebox{\small{$B_2$}}}
\put(10.9,7){\makebox{\small{$B_3$}}}
\put(8,8.2){\makebox{\small{$B_4$}}}
\put(6.9,11.1){\makebox{\small{$B_5$}}}
\put(8,12.5){\makebox{\small{$B_6$}}}
\put(10.8,11){\makebox{\small{$B_7$}}}
\put(12,12.5){\makebox{\small{$B_8$}}}
\put(14.8,11){\makebox{\small{$B_9$}}}
\put(16,8.5){\makebox{\small{$B_{10}$}}}
\put(14.6,7.2){\makebox{\small{$B_{11}$}}}
\end{picture}
\end{figure}

In the first move, we have
$$P(w)\longrightarrow P(x_2x_0x_0x_1x_2x_1x_0x_2x_1x_2x_1x_0)=P(x_2x_1x_2x_1x_0x_2x_1x_2x_1x_0).$$
Let
$w_1:=x_2x_0x_0x_1x_2x_1x_0x_2x_1x_2x_1x_0=x_2x_1x_2x_1x_0x_2x_1x_2x_1x_0$.
Then
\begin{eqnarray*}
P(w_1)=(B_{0,1},B_{0,-1},B_{-\sg_1,1},B_{\sg_2-\sg_1,-1},B_{\sg_2-2\sg_1,1},B_{2\sg_2-2\sg_1,-1},B_{2\sg_2-2\sg_1,1},\;\;\\
B_{2\sg_2-\sg_1,-1},B_{\sg_2-\sg_1,1},B_{\sg_2,-1},B_{0,1}).
\end{eqnarray*}
\begin{figure}
\caption{The complex $P(w_1)$ in $\v^0$}\label{pic2}
\setlength{\unitlength}{10pt}
\begin{picture}(20,15)(0,0)
\put(0,4){\vector(1,0){20}}
\put(16,0){\vector(0,1){14}}
\thicklines
\put(12,4){\line(1,0){6}}
\put(16,2){\line(0,1){6}}
\put(20,4.5){\makebox{\small{$\sg_1$}}}
\put(16.5,14){\makebox{\small{$\sg_2$}}}
\put(6,12){\line(1,-1){10}} \put(6,12){\line(1,0){6}}
\put(8,14){\line(1,-1){10}} \put(8,14){\line(0,-1){6}}
\put(8,14){\line(1,-1){4}}\put(12,12){\line(0,-1){8}}
\put(16,8){\line(-1,0){8}} \put(16.2,4.2){\makebox{\small{$B_0$}}}
\put(15,3){\makebox{\small{$B_1$}}}
\put(12.2,4.2){\makebox{\small{$B_2$}}}
\put(10.9,7){\makebox{\small{$B_3$}}}
\put(8,8.2){\makebox{\small{$B_4$}}}
\put(6.9,11.1){\makebox{\small{$B_5$}}}
\put(8,12.5){\makebox{\small{$B_6$}}}
\put(10.8,11){\makebox{\small{$B_7$}}}
\put(12.2,8.3){\makebox{\small{$B_8$}}}
\put(14.6,7.2){\makebox{\small{$B_{9}$}}}
\end{picture}
\end{figure}

\noindent As one can see, the first move replaces the path $(B_7,B_8,B_9,B_{10},B_{11})$ in Figure \ref{pic1} by the path $(B_7,B_8,B_9)$ in Figure \ref{pic2}.

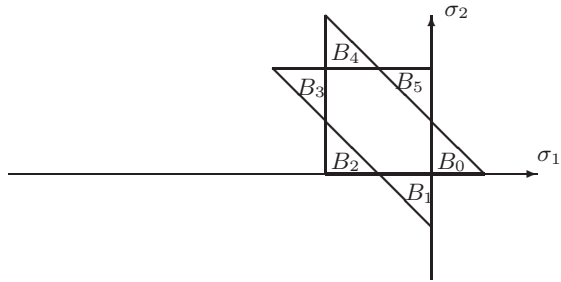
\begin{figure}
\caption{The complex $P(w_2)$ in $\v^0$}\label{pic3}
\begin{picture}(200,120)(0,0)\label{pic3}
\put(0,40){\vector(1,0){200}}
\put(160,0){\vector(0,1){100}}
\thicklines
\put(120,40){\line(1,0){60}}
\put(160,20){\line(0,1){60}}
\put(200,45){\makebox{\small{$\sg_1$}}}
\put(165,100){\makebox{\small{$\sg_2$}}}
\put(160,20){\line(-1,1){60}} \put(180,40){\line(-1,1){60}}
\put(120,40){\line(0,1){60}} \put(160,80){\line(-1,0){60}}
\put(162,42){\makebox{\small{$B_0$}}}
\put(150,30){\makebox{\small{$B_1$}}}
\put(122,42){\makebox{\small{$B_2$}}}
\put(109,70){\makebox{\small{$B_3$}}}
\put(122,83){\makebox{\small{$B_4$}}}
\put(146,72){\makebox{\small{$B_5$}}}
\end{picture}
\end{figure}

In the second move, we have
$$P(w_1)\longrightarrow
P(x_2x_1x_0x_1x_2x_2x_1x_2x_1x_0)=P((x_2x_1x_0)^2).$$ Let
$w_2:=(x_2x_1x_0)^2$. Then
$P(w_2)=(B_{0,1},B_{0,-1},B_{-\sg_1,1},B_{\sg_2-\sg_1,-1},B_{\sg_2-\sg_1,1},B_{\sg_2,-1},$ $B_{0,1}).$

\noindent The second move replaces the path $(B_3,B_4,B_5,B_6,B_7,B_8)$ in Figure \ref{pic2} with the path $(B_3,B_4)$ in Figure \ref{pic3}. Since $w_2$ belongs to $\mathcal{P}$, from Definition \ref{paths}(iii), $P(w_2)$ moves to $P(1)$.
\end{exa}
\centerline{\bf Acknowledgements}
{This research was in part supported by a grant from IPM (No. 90170217).
The authors would like to thank  the Center of Excellence for Mathematics, University of Isfahan.} The authors also would like to thank the anonymous referee for several crucial comments on an early version
of this work and also for
suggesting a geometric approach to the proof of Theorem \ref{Baby W pre}.

\end{document}